\newtheorem{thm}{Theorem}
\newtheorem{ob}[thm]{Observation}
\newtheorem{cor}[thm]{Corollary}
\newtheorem{conj}[thm]{Conjecture}
\newtheorem{quest}[thm]{Question}
\newtheorem{clm}{Claim}[thm]
\newcommand{\barD}{\overline{D}}
\newcommand{\barS}{\overline{S}}
\newcommand{\cP}{\mathcal{P}}
\newcommand{\epn}{{\rm epn}}
\newcommand{\diam}{{\rm diam}}
\newcommand{\gt}{\gamma_t}
\newcommand{\gL}{\gamma_L}%LD-set
\newcommand{\gLT}{\gamma_t^L}%LD-set
\newcommand{\smallqed}{{\tiny ($\Box$)}}
\newenvironment{unnumbered}[1]{\trivlist
\item [\hskip \labelsep {\bf #1}]\ignorespaces\it}{\endtrivlist}
\newcommand{\claimproof}{\noindent\emph{Proof of claim.} }
\begin{document}

\title{Locating-Total Dominating Sets in Twin-Free Graphs: a Conjecture}

\author{$^{1,2}$Florent Foucaud and $^1$Michael A. Henning\\
\\
$^1$Department of Pure and Applied Mathematics \\
University of Johannesburg \\
Auckland Park, 2006 South Africa\\
E-mail: mahenning@uj.ac.za \\
\\
$^2$LIMOS\\
Universit\'e Blaise Pascal\\
Clermont-Ferrand, France\\
E-mail: florent.foucaud@gmail.com \\
}

\maketitle

\begin{abstract}
A total dominating set of a graph $G$ is a set $D$ of vertices of $G$ such that every vertex of $G$ has a neighbor in $D$. A locating-total dominating set of $G$ is a total dominating set $D$ of $G$ with the additional property that every two distinct vertices outside $D$ have distinct neighbors in $D$; that is, for distinct vertices $u$ and $v$ outside $D$, $N(u) \cap D \ne N(v) \cap D$ where $N(u)$ denotes the open neighborhood of $u$. A graph is twin-free if every two distinct vertices have distinct open and closed neighborhoods.  The location-total domination number of $G$, denoted $\gLT(G)$, is the minimum cardinality of a locating-total dominating set in $G$. It is well-known that every connected graph of order $n \ge 3$ has a total dominating set of size at most $\frac{2}{3}n$. We conjecture that if $G$ is a twin-free graph of order~$n$ with no isolated vertex, then $\gLT(G) \le \frac{2}{3}n$. We prove the conjecture for graphs without $4$-cycles as a subgraph. We also prove that if $G$ is a twin-free graph of order~$n$, then $\gLT(G) \le \frac{3}{4}n$.
\end{abstract}

{\small \textbf{Keywords:} Locating-dominating sets; Total dominating sets; Dominating sets. }\\
\indent {\small \textbf{AMS subject classification: 05C69}}

%\newpage
\section{Introduction}

A \emph{dominating set} in a graph $G$ is a set $D$ of vertices of $G$ such that every vertex outside $D$ is adjacent to a vertex in $D$. The \emph{domination number}, $\gamma(G)$, of $G$ is the minimum cardinality of a dominating set in $G$. A \emph{total dominating set}, abbreviated TD-set, of $G$ is a set $D$ of vertices of $G$ such that every vertex of $G$ is adjacent to a vertex in $D$. The \emph{total domination number} of $G$, denoted by $\gt(G)$, is the minimum cardinality of a TD-set in $G$.  The literature on the subject of domination parameters in graphs up to the year 1997 has been surveyed and detailed in the two books~\cite{hhs1, hhs2}, and a recent book on total dominating sets is also available~\cite{bookTD}.

Among the existing variations of (total) domination, the ones of \emph{location-domination} and \emph{location-total domination} are widely studied. A set $D$ of vertices \emph{locates} a vertex $v$ if the neighborhood of $v$ within $D$ is unique among all vertices in $V(G)\setminus D$. A \emph{locating}-\emph{dominating set} is a dominating set $D$ that locates all the vertices, and the \emph{location-domination number} of $G$, denoted $\gL(G)$, is the minimum cardinality of a locating-dominating set in $G$. A \emph{locating}-\emph{total dominating set}, abbreviated LTD-set, is a TD-set $D$ that locates all the vertices, and the \emph{location-total domination number} of $G$, denoted $\gLT(G)$, is the minimum cardinality of a LTD-set in $G$. The concept of a locating-dominating set was introduced and first studied by Slater~\cite{s2,s3} (see also~\cite{cst,fh,Heia,rs,s4}), and the additional condition that the locating-dominating set be a total dominating set  was first considered in~\cite{hhh06} (see also~\cite{BCMMS07,BD11,BFL08,C08,CR09,CS11,hl12,hr12}).

We remark that there are (twin-free) graphs with total domination number two and arbitrarily large location-total domination number. For $k \ge 3$, let $G_k$ be the graph obtained from $K_{2,k}$ as follows: select one of the two vertices of degree~$k$ and subdivide every edge incident with it; then, add an edge joining the two vertices of degree~$k$; finally, add two new vertices of degree~$1$, each adjacent to one of the degree~$k$-vertices. The resulting graph, $G_k$, has order $2k+4$, total domination number~$2$, and we claim that its location-total domination number is exactly one-half the order (namely, $k+2$). One possible LTD-set of $G_k$ consists of the two vertices of degree~$k+1$, and for each pair of adjacent vertices of degree~$2$, one of the vertices of that pair belongs to the LTD-set. The graph $G_4$, for example, is illustrated in Figure~\ref{f:G4}, where the darkened vertices form an LTD-set in $G_4$. To see that no smaller LTD-set exists, observe first that the two vertices of degree~$k+1$ must belong to any LTD-set of $G_k$ (otherwise, the two vertices of degree~$1$ are not totally dominated). Moreover, consider any set of two pairs of adjacent vertices of degree~$2$ in $G_k$. In order for these four vertices to be located, at least one of them must belong to any LTD-set (otherwise, the ones adjacent to the same vertex of degree~$k+1$ are not located). This shows that for at least $k-1$ pairs of adjacent degree~$2$-vertices, one member of that pair belongs to any LTD-set of $G_k$. Thus, any LTD-set of $G_k$ has size at least $k+1$. Assuming that we have an LTD-set of size exactly $k+1$, then we have a pair of adjacent vertices of degree~$2$ not belonging to the LTD-set, and moreover none of the degree~$1$-vertices belongs to the LTD-set. But then each of the two above degree~$2$-vertices and each degree~$1$-vertex of $G_k$ is totally dominated only by its neighbor of degree~$k+1$ and is therefore not located, a contradiction. Hence $\gLT(G_k)=k+2$, as claimed.

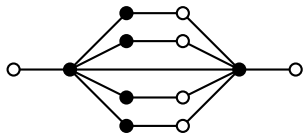
\begin{figure}[htb]
\tikzstyle{every node}=[circle, draw, fill=black!0, inner sep=0pt,minimum width=.16cm]
\begin{center}
\begin{tikzpicture}[thick,scale=.6]
  \draw(0,0) { % <-- START CO-ORDINATES
    %%%%EDGES
    +(2.50,2.50) -- +(3.75,2.50)
    +(3.75,1.88) -- +(2.50,1.88)
    +(2.50,2.50) -- +(1.25,1.25)
    +(1.25,1.25) -- +(2.50,1.88)
    +(1.25,1.25) -- +(2.50,0.63)
    +(1.25,1.25) -- +(2.50,0.00)
    +(2.50,0.00) -- +(3.75,0.00)
    +(3.75,0.00) -- +(5.00,1.25)
    +(5.00,1.25) -- +(3.75,1.88)
    +(3.75,2.50) -- +(5.00,1.25)
    +(5.00,1.25) -- +(3.75,0.63)
    +(3.75,0.63) -- +(2.50,0.63)
    +(1.25,1.25) -- +(5.00,1.25)
    +(6.25,1.25) -- +(5.00,1.25)
    +(0,1.25) -- +(1.25,1.25)
    %%%%VERTICES
    +(3.75,0.63) node{}
    +(2.50,0.63) node[fill=black!100]{}
    +(2.50,1.88) node[fill=black!100]{}
    +(2.50,2.50) node[fill=black!100]{}
    +(3.75,2.50) node{}
    +(3.75,1.88) node{}
    +(2.50,0.00) node[fill=black!100]{}
    +(3.75,0.00) node{}
    +(5.00,1.25) node[fill=black!100]{}
    +(1.25,1.25) node[fill=black!100]{}
    +(0,1.25) node{}
    +(6.25,1.25) node{}

  };
\end{tikzpicture}
\end{center}
\vskip -0.6 cm \caption{The twin-free graph $G_4$.}
%A twin-free graph with total domination number two and location-total domination number one-half the order. Dark vertices form a minimum-size locating-total dominating set.}
\label{f:G4}
\end{figure}

A classic result due to Cockayne et al.~\cite{CDH80} states that every connected graph of order at least~$3$ has a TD-set of cardinality at most two-thirds its order. While there are many graphs (without isolated vertices) which have location-total domination number much larger than two-thirds their order, the only such graphs that are known contain many \emph{twins}, that is, pairs of vertices with the same closed or open neighborhood. We conjecture that in fact, twin-free graphs have location-total domination number at most two-thirds their order. In this paper we initiate the study of this conjecture.

\noindent\textbf{Definitions and notations.} For notation and graph theory terminology, we in general follow~\cite{hhs1}. Specifically, let $G$ be a graph with vertex set $V(G)$, edge set $E(G)$ and with no isolated vertex. The \emph{open neighborhood} of a vertex $v \in V(G)$ is $N_G(v) = \{u \in V \, | \, uv \in E(G)\}$ and its \emph{closed neighborhood} is the set $N_G[v] = N_G(v) \cup \{v\}$. The \emph{degree} of $v$ is $d_G(v) = |N_G(v)|$.
For a set $S \subseteq V(G)$, its \emph{open neighborhood} is the set $N_G(S) = \bigcup_{v \in S} N_G(v)$, and its \emph{closed neighborhood} is the set $N_G[S] = N_G(S) \cup S$.
Given a set $S \subset V(G)$ and a vertex $v \in S$, an \emph{$S$-external private neighbor} of $v$ is a vertex outside $S$ that is adjacent to $v$ but to no other vertex of $S$ in $G$. The set of all $S$-external private neighbors of $v$, abbreviated $\epn_G(v,S)$, is the \emph{$S$-external private neighborhood}.
The subgraph induced by a set $S$ of vertices in $G$ is denoted by $G[S]$.
If the graph $G$ is clear from the context, we simply write $V$, $E$, $N(v)$, $N[v]$, $N(S)$, $N[S]$, $d(v)$ and $\epn(v,S)$ rather than $V(G)$, $E(G)$, $N_G(v)$, $N_G[v]$, $N_G(S)$, $N_G[S]$, $d_G(v)$ and $\epn_G(v,S)$, respectively.
%
%% For a set $S \subseteq V$, its \emph{open neighborhood} is the set $N(S) = \cup_{v \in S} N(v)$ and its \emph{closed neighborhood} is the set $N[S] = N(S) \cup S$.

Given a set $S$ of edges in $G$, we will denote by $G-S$ the subgraph obtained from $G$ by deleting all edges of $S$. For a set $S$ of vertices, $G-S$ is the graph obtained from $G$ by removing all vertices of $S$ and removing all edges incident to vertices of $S$. A \emph{cycle} on $n$ vertices is denoted by $C_n$ and a \emph{path} on $n$ vertices by $P_n$. The \emph{girth} of $G$ is the length of a shortest cycle in $G$.

A set $D$ is a dominating set of $G$ if $N[v] \cap D \ne \emptyset$ for every vertex $v$ in $G$, or, equivalently, $N[D] = V(G)$. A set $D$ is a TD-set of $G$ if $N(v) \cap D \ne \emptyset$ for every vertex $v$ in $G$, or, equivalently, $N(D) = V(G)$.
Two distinct vertices $u$ and $v$ in $V(G) \setminus D$ are \emph{located} by $D$ if they have distinct neighbors in $D$; that is, $N(u) \cap D \ne N(v) \cap D$. If a vertex $u \in V(G) \setminus D$ is located from every other vertex in $V(G)\setminus D$, we simply say that $u$ is \emph{located} by $D$.

A set $S$ is a \emph{locating set} of $G$ if every two distinct vertices outside $S$ are located by $S$. In particular, if $S$ is both a dominating set and a locating set, then $S$ is a locating-dominating set. Further, if $S$ is both a TD-set and a locating set, then $S$ is a \emph{locating-total dominating set}. We remark that the only difference between a locating set and a locating-dominating set in $G$ is that a locating set might have a unique non-dominated vertex.

%An \emph{independent set} in $G$ is a set of vertices no two of which are adjacent. The \emph{independence number} of $G$, denoted $\alpha(G)$, is the maximum cardinality of an independent set of vertices in $G$.  The complement of an independent set in $G$ is a \emph{vertex cover} in $G$. Thus if $S$ is a vertex cover in $G$, then every edge of $G$ is incident with at least one vertex in $S$.
%
%Two edges in a graph $G$ are \emph{independent} if they are vertex disjoint in $G$. A set of pairwise independent edges of $G$ is called a \emph{matching} in $G$. A \emph{perfect matching} $M$ in $G$ is a matching such that every vertex of $G$ is incident to an edge of $M$.

Two distinct vertices $u$ and $v$ of a graph $G$ are \emph{open twins} if $N(u)=N(v)$ and \emph{closed twins} if $N[u]=N[v]$. Further, $u$ and $v$ are \emph{twins} in $G$ if they are open twins or closed twins in $G$. A graph is \emph{twin-free} if it has no twins.

For two vertices $u$ and $v$ in a connected graph $G$, the \emph{distance} $d_G(u,v)$ between $u$ and $v$ is the length of a shortest $(u,v)$-path in $G$. The maximum distance among all pairs of vertices of $G$ is the \emph{diameter} of $G$, which is denoted by $\diam(G)$. A \emph{nontrivial connected graph} is a connected graph of order at least~$2$. A \emph{leaf} of graph $G$ is a vertex of degree~$1$, while a \emph{support vertex} of $G$ is a vertex adjacent to a leaf.

A \emph{rooted tree} $T$ distinguishes one vertex $r$ called the \emph{root}. For each vertex $v \ne r$ of $T$, the \emph{parent} of $v$ is the neighbor of $v$ on the unique $(r,v)$-path, while a \emph{child} of $v$ is any other neighbor of $v$. A \emph{descendant} of $v$ is a vertex $u \ne v$ such that the unique $(r,u)$-path contains $v$. Thus, every child of $v$ is a descendant of $v$. We let $D(v)$ denote the set of descendants of $v$, and we define $D[v] = D(v) \cup \{v\}$. The \emph{maximal subtree} at $v$ is the subtree of $T$ induced by $D[v]$, and is denoted by $T_v$.

The \emph{$2$-corona} of a graph $H$ is the graph of order~$3|V(H)|$ obtained from $H$ by adding a vertex-disjoint copy of a path $P_2$ for each vertex $v$ of $H$ and adding an edge joining $v$ to one end of the added path.

We use the standard notation $[k] = \{1,2,\ldots,k\}$. If $A$ and $B$ are sets, then $A \times B = \{(a,b) \mid a \in A, b \in B\}$.
%Unless otherwise stated, we will assume that all graphs in this paper have \textbf{no isolated vertices}.

%\\[-2mm]

%\newpage
\noindent\textbf{Conjectures and known results.} As a motivation for our study, we pose and state the following conjecture.

\begin{conj}\label{conj}
Every twin-free graph $G$ of order $n$ without isolated vertices satisfies $\gLT(G)\le \frac{2}{3}n$.
\end{conj}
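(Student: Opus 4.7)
My plan is to attempt Conjecture~\ref{conj} by strong induction on the order $n$, anchored on the Cockayne--Dawes--Hedetniemi bound $\gt(G) \le \frac{2}{3}n$: that classical result already furnishes a TD-set meeting the target cardinality, so the entire difficulty is the extra locating requirement. The inductive step would look for a small \emph{reducible configuration} $H \subseteq V(G)$ with the property that an optimum LTD-set of $G - H$ (supplied by the induction hypothesis) extends to one of $G$ by adding at most $\frac{2}{3}|H|$ new vertices. Because $G - H$ can pick up twins or isolated vertices after the deletion, the inductive hypothesis most likely has to be formulated for a slightly richer class that allows a bounded ``boundary'' to attach; alternatively, the chosen $H$ must be certified never to create such obstructions.

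Before tackling the general case I would dispose of graph classes in which twin-freeness already supplies enough structure for a direct argument: paths, cycles, trees, graphs of diameter~$2$, and graphs without a $C_4$ subgraph (this last case being the one announced in the abstract). Several local reductions would drive the inductive step. If a support vertex $s$ has a leaf neighbor $\ell$, place both in the LTD-set; twin-freeness guarantees that $s$ has at most one leaf neighbor, so the operation inserts two vertices per two removed and is not immediately wasteful. Long chains of degree-$2$ vertices and $2$-coronas admit explicit LTD-sets that meet the $\frac{2}{3}$ ratio on the nose. Cut vertices with small branches let one decompose the problem, after checking that neither side becomes twin-rich.

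The central obstacle is the interaction between twin-freeness and $4$-cycles. The graph $G_k$ of Figure~\ref{f:G4} shows that a single $K_{2,k}$-like pocket, even after small perturbations that make it twin-free, can drive $\gLT(G)$ far above $\gt(G)$: many vertices share almost the whole of their neighborhood in any small TD-set, and distinguishing them eats exactly the slack in the Cockayne--Dawes--Hedetniemi budget. Any eventual proof must therefore include a local exchange argument around each $C_4$: identify a bounded subset of four to six vertices around the cycle that can be inserted into an LTD-set in a way that is both locally efficient and safe with respect to twin-freeness of the reduced graph. This is also the step at which the extra assumption ``no $C_4$ subgraph'' presumably gives the authors their partial result for free.

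A complementary line of attack is a \emph{repair} argument. Start from any minimum TD-set $D_0$ of size at most $\frac{2}{3}n$, partition $V(G) \setminus D_0$ into equivalence classes $C_1,\ldots,C_k$ according to the trace $N(v)\cap D_0$, and observe that moving $|C_i|-1$ vertices from each $C_i$ into $D_0$ already yields an LTD-set. The task then reduces to proving that twin-freeness forces $\sum_i (|C_i|-1) \le \frac{2}{3}n - |D_0|$, ideally via swaps that simultaneously evict vertices rendered redundant in $D_0$ without breaking total domination. I expect the hardest step to be controlling this sum globally, since the examples $G_k$ rule out any purely local ``one pocket at a time'' estimate; this is the point at which a genuinely new idea, beyond induction combined with Cockayne--Dawes--Hedetniemi, will likely be required.
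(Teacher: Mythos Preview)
The statement you are attempting is Conjecture~\ref{conj}, which the paper explicitly poses as an \emph{open problem}; there is no proof of it in the paper to compare against. The paper establishes only two partial results toward it: Theorem~\ref{t:generalbd} (the weaker bound $\gLT(G)\le\frac{3}{4}n$ for all twin-free graphs) and Theorem~\ref{t:no4cycle} (the full $\frac{2}{3}n$ bound under the additional hypothesis that $G$ contains no $4$-cycle as a subgraph).

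Your proposal is not a proof but a research outline, and you are candid about this: you identify the $C_4$-rich configurations (exemplified by the graphs $G_k$) as the central obstruction and explicitly concede that ``a genuinely new idea\ldots will likely be required.'' That assessment is accurate and matches the state of the problem as the paper leaves it.

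It is still worth comparing your two proposed lines of attack with what the paper actually does for its partial results. Your ``repair'' argument---start from a minimum TD-set $D_0$, partition $V(G)\setminus D_0$ by trace on $D_0$, and try to bound $\sum_i(|C_i|-1)$---is close in spirit to the paper's proof of the $\frac{3}{4}$ bound. There the authors begin with a minimum TD-set of the special form guaranteed by Theorem~\ref{t:BC} (so that at least half its vertices have external private neighbors), greedily enlarge it to a maximal set $D$ preserving the inequality $n_1(D)+n_2(D)\ge\frac{1}{2}|D|$, and then exhibit \emph{two} LTD-sets, $D\cup X_D$ and $D\cup Y'_D$, taking the smaller. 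That two-sided estimate gives $\frac{3}{4}n$ but not $\frac{2}{3}n$, which supports your suspicion that the repair route by itself is not enough. Your inductive/reducible-configuration approach is exactly how the paper handles the $C_4$-free case: peel off a small subtree, a triangle, or a shortest cycle together with nearby short components, repair twin-freeness of the remainder by hand, and recurse. The absence of $4$-cycles is invoked repeatedly to argue that two unlocated vertices sharing a pair of dominators would force a $C_4$---precisely the ``local exchange around each $C_4$'' step you flag as the missing ingredient in general.
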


In an earlier paper, Henning and L\"owenstein~\cite{hl12} proved that every connected cubic claw-free graph (not necessarily twin-free) has a LTD-set of size at most one-half its order, which implies that Conjecture~\ref{conj} is true for such graphs. Moreover they conjectured this to be true for every connected cubic graph, with two exceptions --- which, if true, would imply Conjecture~\ref{conj} for all cubic graphs.

A similar conjecture for locating-dominating sets, that motivated the present study,  was posed in~\cite{conjpaper}, and was strengthened in~\cite{Heia}.\footnote{Note that in~\cite{Heia}, we mistakenly attributed Conjecture~\ref{conj-LD} to the authors of~\cite{conjpaper}. We discuss this in more detail in~\cite{cubic}.}

\begin{conj}[Garijo, Gonz\'alez, M\'arquez~\cite{conjpaper}]\label{conj-LD-original}
There exists an integer $n_1$ such that for any $n\geq n_1$, the maximum value of the location-domination number of a connected twin-free graph of order~$n$ is $\lfloor\frac{n}{2}\rfloor$.
\end{conj}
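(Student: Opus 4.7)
The plan is to split the conjecture into its two natural halves: an extremal upper bound $\gL(G)\le\lfloor n/2\rfloor$ for every connected twin-free graph of sufficiently large order, and a matching family of examples showing this bound is tight for infinitely many $n$. These two halves require very different techniques, so I would attack them independently.

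For the lower bound, the plan is to exhibit an infinite family of connected twin-free graphs $\{H_n\}$ with $\gL(H_n)=\lfloor n/2\rfloor$. A natural template is a long path or caterpillar with small twin-breaking gadgets attached to each internal vertex, where the gadgets are chosen so that every locating-dominating set must contain at least one vertex from each gadget (forcing roughly half the order into any such set), while the gadgets are also sufficiently asymmetric that every vertex of $H_n$ has a distinct open and closed neighborhood. Verifying twin-freeness is then routine vertex-by-vertex, and the $\gL$ lower bound follows by a standard double-counting of forced vertices.

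For the upper bound, my plan is a minimum counterexample argument. Let $G$ be a connected twin-free graph of minimum order $n$ with $\gL(G)>\lfloor n/2\rfloor$, and let $S$ be a minimum locating-dominating set. Since $S$ locates $V\setminus S$, the map $v\mapsto N(v)\cap S$ is injective on $V\setminus S$. I would partition $S$ by the size of $\epn(v,S)$, and look for a local replacement: a swap of some $u\in S$ with a vertex $v\in V\setminus S$ adjacent to $u$, possibly combined with the deletion of a redundant vertex of $S$, that produces a smaller locating-dominating set. The twin-free hypothesis is used to rule out configurations in which two vertices of $S$, or two vertices of $V\setminus S$, would be indistinguishable after the swap, thereby giving enough flexibility to perform one.

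The main obstacle will be this last step. Vertices of $S$ with zero or one external private neighbor are the most delicate: they are essential to the locating property yet are easily disturbed by a swap. The twin-free condition has to be invoked in several subtle ways—to forbid pairs of vertices of $S$ whose private neighborhoods "look the same," and to forbid pairs of vertices outside $S$ that would collide after exchange—and it is not at all clear a priori that it is strong enough. This is also where the unspecified integer $n_1$ likely enters: a finite list of sporadic small twin-free graphs may violate the inequality, and the swap argument would yield the improvement only once $n$ is large enough that the degree, diameter, or girth forces a local substructure on which the argument succeeds.
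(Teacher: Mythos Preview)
The statement you are attempting to prove is labeled in the paper as a \emph{conjecture} (due to Garijo, Gonz\'alez and M\'arquez), not a theorem. The paper does not give a proof of it; on the contrary, the paragraph immediately following it states that even the strengthened form (Conjecture~\ref{conj-LD}) ``remains open,'' and only partial results for restricted graph classes are known. So there is no ``paper's own proof'' to compare your proposal against.

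Your proposal is therefore not a proof but a sketch of a strategy toward an open problem. The lower-bound half is fine and well known: infinite families of connected twin-free graphs with $\gL=\lfloor n/2\rfloor$ already exist in the literature (paths, and many other constructions discussed in~\cite{Heia,conjpaper}). The substance is entirely in the upper-bound half, and here your plan is essentially ``try a minimum-counterexample swap argument and hope twin-freeness gives enough leverage.'' You yourself note that ``it is not at all clear a priori that it is strong enough,'' and indeed this is exactly the obstacle that has kept the conjecture open. Local swap arguments of the kind you describe have been tried; what makes the problem hard is that a vertex of $S$ can be simultaneously essential for locating many pairs in $V\setminus S$, so removing or exchanging it can destroy location globally, not just locally, and twin-freeness does not obviously repair this. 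Your proposal contains no concrete mechanism for overcoming that, so it should be regarded as a heuristic outline rather than a proof.
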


\begin{conj}[Foucaud, Henning, L\"owenstein, Sasse~\cite{cubic,Heia}]\label{conj-LD}
Every twin-free graph $G$ of order~$n$ without isolated vertices satisfies $\gL(G)\le \frac{n}{2}$.
\end{conj}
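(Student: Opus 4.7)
Conjecture~\ref{conj-LD} appears to be still open, so the best I can realistically offer is a line of attack. The natural strategy is strong induction on $n = |V(G)|$, parallel to Ore's proof that $\gamma(G) \le n/2$ for graphs without isolated vertices but additionally tracking the locating requirement. The few twin-free graphs on $n \le 4$ vertices are checked directly. For the inductive step, the plan is to identify a small reducible configuration $H \subseteq V(G)$ on $k$ vertices together with a designated subset $S_H \subseteq H$ of size at most $k/2$ such that (i) $S_H$ dominates $H$ and separates every pair of vertices in $V(G)\setminus S_H$ whose $D$-neighbourhoods could otherwise coincide, and (ii) the graph $G'$ obtained from $G-H$, possibly after minor local surgery, is still a twin-free graph without isolated vertices. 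Combining $S_H$ with an optimal locating-dominating set of $G'$ then yields one of $G$ of cardinality at most $\frac{k}{2} + \frac{n-k}{2} = \frac{n}{2}$.

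Candidate configurations worth trying in order of escalation: if $\delta(G)=1$, take a support vertex together with its (by twin-freeness, unique) leaf; next a leaf hanging off a long pendant path, which is a classical reducible object; then, in the minimum-degree-two regime, a shortest cycle, or a vertex pair $u,v$ chosen to maximise $|N(u) \triangle N(v)|$, whose nonemptiness is guaranteed by twin-freeness and often supplies the asymmetry needed to build $S_H$. If no purely local reduction survives, I would fall back on a global discharging scheme: assign each vertex initial charge $1$, let each vertex $v \in V(G) \setminus D$ ship charge $\frac{1}{2}$ to a distinguished neighbour in a hypothetical minimum locating-dominating set $D$, and use the twin-free hypothesis to rule out the over-assignment of charge, thereby forcing $|D| \ge n/2$.

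The principal obstacle is precisely the one encountered by the authors in their work on $\gLT$: deleting a small subgraph frequently creates twins in $G-H$, and because the conjectured bound $n/2$ is tight for infinitely many graphs (e.g.\ the constructions discussed in~\cite{conjpaper}), there is essentially no slack available to spend an extra vertex destroying such newly created twins. I expect the hardest case to be ``dense'' twin-free graphs of high minimum degree and small diameter, where induction on the number of vertices is least effective and the locating requirement interacts nontrivially with the domination requirement. Resolving that case likely requires either a reducible configuration tailored to high connectivity, or a genuinely global argument such as the discharging sketch above, rather than a local reduction.
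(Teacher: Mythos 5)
You have correctly recognised that the statement in question is Conjecture~\ref{conj-LD}, which the paper itself does not prove: it is stated as an open conjecture attributed to~\cite{cubic,Heia}, and the paper only records that it has been verified for particular classes (bipartite graphs and graphs with no $4$-cycles~\cite{conjpaper}, split and co-bipartite graphs~\cite{Heia}, and cubic graphs~\cite{cubic}). There is therefore no proof in the paper to compare against, and your proposal --- honestly framed as a line of attack rather than a completed argument --- is not a proof. The inductive step is never carried out: no reducible configuration is actually shown to satisfy your conditions (i) and (ii), and the obstruction you yourself identify (deleting a configuration creates twins in $G-H$, and the tightness of the bound $\frac{n}{2}$ leaves no spare vertex to destroy them) is exactly the difficulty that the known partial results circumvent with class-specific tools such as selected vertex covers and matchings, tools the present paper explicitly says do not transfer to the total-domination setting.

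Two concrete defects in the sketch are worth flagging. First, the discharging fallback is aimed at the wrong inequality: you propose to ``force $|D| \ge n/2$'' for a minimum locating-dominating set $D$, but Conjecture~\ref{conj-LD} asserts the \emph{upper} bound $\gL(G) \le \frac{n}{2}$, i.e., the existence of a small locating-dominating set; a lower bound on $|D|$ would establish tightness of the conjectured bound, not the bound itself. Second, your first candidate reduction (remove a support vertex together with its unique leaf) does not obviously preserve the hypotheses: deleting the support vertex can isolate other neighbours or turn two of its remaining neighbours into open twins, and repairing this costs vertices you cannot afford. The analogous bookkeeping is precisely what consumes most of the case analysis in Section~\ref{sec:noC4} of the paper for the $\gLT$ version restricted to $C_4$-free graphs, which suggests that in full generality a purely local reduction scheme of the kind you outline is unlikely to close the argument without substantial new ideas.
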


Conjecture~\ref{conj-LD} remains open, although it was proved for a number of graph classes such as bipartite graphs and graphs with no $4$-cycles~\cite{conjpaper}, split and co-bipartite graphs~\cite{Heia}, and cubic graphs~\cite{cubic}. Some of these results were obtained using selected vertex covers and matchings, but none of these techniques seems to be useful in the study of Conjecture~\ref{conj}.

\noindent\textbf{Our results.} We prove the bound $\gLT(G)\le \frac{3}{4}n$ in Section~\ref{sec:general}. We then  give support to Conjecture~\ref{conj} by proving it for graphs without $4$-cycles in Section~\ref{sec:noC4}, where we also characterize all extremal examples without $4$-cycles. (In this paper, by ``graph with no $4$-cycles'', we mean that the graph does not contain any $4$-cycle as a subgraph, whether the $4$-cycle is induced or not.) We also discuss Conjecture~\ref{conj} in relation with the minimum degree in Section~\ref{sec:mindegree}, and we conclude the paper in Section~\ref{sec:conclu}.

\section{Preliminaries}

This section contains a number of preliminary results that will be useful in the next sections.

\begin{thm}[Cockayne et al.~\cite{CDH80}; Brigham et al.~\cite{BCV00}] \label{thm:TD-extremal}
If $G$ is a connected graph of order $n \ge 3$, then $\gt(G)\le \frac{2}{3}n$. Further, $\gt(G) = \frac{2}{3}n$ if and only if $G$ is isomorphic to a $3$-cycle, a $6$-cycle, or the $2$-corona of some connected graph $H$.
\end{thm}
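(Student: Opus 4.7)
The plan is to prove the upper bound first, then characterize the equality cases.

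For the upper bound, I would first reduce to the case of trees: for any spanning tree $T$ of $G$, every TD-set of $T$ is a TD-set of $G$, so $\gt(G) \le \gt(T)$. For trees, I would then induct on the order $n$, with small cases ($n \in \{3,4,5,6\}$) verified directly. For the inductive step in a tree $T$, consider a longest path $P: v_1v_2 \cdots v_k$. Since $P$ is longest, $v_1$ is a leaf and every neighbor of $v_2$ other than $v_3$ is a leaf. Depending on the degree of $v_2$ (which controls how many leaves hang off it) and on the neighborhood of $v_3$, I would prune a subtree of order $3$ from $T$ to obtain a smaller forest $T'$, apply the inductive hypothesis to each component of $T'$, and extend a minimum TD-set of $T'$ by two well-chosen vertices to obtain a TD-set of $T$ of size at most $\frac{2}{3}n$.

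For the characterization, one direction is easy: direct computation shows that $C_3$, $C_6$, and the $2$-corona of any connected graph $H$ all achieve $\gt(G) = \frac{2}{3}|V(G)|$. For the converse, suppose $\gt(G) = \frac{2}{3}n$ and $G \notin \{C_3, C_6\}$. Using a longest-path argument analogous to the upper bound proof, I would show that $G$ must contain a pendant $P_2$ attached to some vertex $v$ of degree at least~$2$; that is, $v$ is adjacent to a vertex $x$ of degree~$2$ whose other neighbor $y$ is a leaf. Deleting $\{v,x,y\}$ yields a smaller graph $G'$, and the tightness of the upper bound forces $\gt(G') \ge \gt(G) - 2 = \frac{2}{3}(n-3)$, so equality persists in (each component of) $G'$. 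By induction, each component of $G'$ is $C_3$, $C_6$, or a $2$-corona, and reattaching the triple $\{v,x,y\}$ to $G'$ forces $G$ itself to be the $2$-corona of a connected graph $H$.

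The main obstacle is the characterization, specifically the local reduction step: I must argue that the deletion of the three vertices around a pendant $P_2$ drops $\gt$ by exactly $2$, and that the resulting graph remains extremal for its order. This hinges on choosing a minimum TD-set of $G$ that contains the pendant support $x$ together with exactly one of $\{v,y\}$, and then verifying that the restriction to $V(G) \setminus \{v,x,y\}$ is still a TD-set of $G'$. Further care is needed when the deletion disconnects $G$ (each component must be separately extremal), when $v$ carries several pendant $P_2$-neighbors, and when the reduced graph happens to be $C_3$ or $C_6$, which correspond to the $2$-coronas of very small base graphs $H$. Once these subcases are all treated, the reattachment argument assembles $H$ and completes the characterization.
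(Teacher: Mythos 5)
This theorem is not proved in the paper: it is quoted as a known result, with the upper bound attributed to Cockayne, Dawes and Hedetniemi~\cite{CDH80} and the characterization of equality to Brigham, Carrington and Vitray~\cite{BCV00}. There is therefore no in-paper proof to compare your proposal against; I can only assess the proposal on its own terms.

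Your outline follows the standard strategy (spanning-tree reduction plus longest-path induction for the bound, and a local pendant-$P_2$ reduction for the extremal characterization), and both halves can be made to work, but as written there are two concrete gaps. First, in the tree induction, pruning a three-vertex subtree near the end of a longest path can leave components of order $1$ or $2$ (for instance when $v_3$ has leaf-children or pendant $P_2$'s of its own), and the induction hypothesis does not apply to those; the case analysis must either prune larger pieces or argue around such components, and the same issue recurs for $G'=G-\{v,x,y\}$ in the characterization, where you need every component of $G'$ to have order at least $3$ before you may invoke $\gt(G')\le\frac{2}{3}|V(G')|$. Second, and more seriously, the assertion that every extremal graph other than $C_3$ and $C_6$ (and $P_3$, where your vertex $v$ has degree $1$) contains a pendant $P_2$ is essentially the whole content of the converse: one must first show an extremal graph outside $\{C_3,C_6\}$ has a leaf, then that no support vertex has two leaves, then that the support vertex has degree exactly $2$. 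You state this is obtained ``analogously to the upper bound proof'' but give no argument, and this is precisely the part that required a separate paper~\cite{BCV00}. Finally, after the reduction you must also exclude the possibility that a component of $G'$ is $C_3$ or $C_6$ (reattaching $\{v,x,y\}$ to such a component produces a non-extremal graph, which yields the needed contradiction); your sketch mentions these as ``$2$-coronas of very small base graphs,'' but $C_6$ is not a $2$-corona and $C_3$ is not either, so this subcase is not an instance of the reattachment argument and must be handled by exhibiting a smaller TD-set explicitly.
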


We will need the following property of minimum TD-sets in a graph established in~\cite{H00}.

\begin{thm}[\cite{H00}]\label{t:BC}
If $G$ is a connected graph of order~$n \ge 3$, and $G \not\cong K_n$, then $G$ has a minimum TD-set $S$ such that every vertex $v \in
S$ satisfies $|\epn(v,S)| \ge 1$ or has a neighbor~$x$ in $S$ of degree~$1$ in $G[S]$ satisfying $|\epn(x,S)| \ge 1$.
\end{thm}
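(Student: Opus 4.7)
My approach is the standard extremal-choice technique. Among all minimum total dominating sets of $G$, I fix $S$ that is extremal with respect to an auxiliary parameter: for concreteness, let $S$ minimize $|E(G[S])|$ and, subject to that, maximize $\sum_{v\in S}|\epn(v,S)|$. Supposing for contradiction that some $v\in S$ violates the stated conclusion, I will use the extremality of $S$ to force a sharp local structure at $v$, and then perform a swap that contradicts that extremality.

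The first step is to pin down the structure at $v$ using minimality of $|S|$. By the failure of the conclusion, $|\epn(v,S)|=0$ and every $x\in N(v)\cap S$ with $\deg_{G[S]}(x)=1$ satisfies $|\epn(x,S)|=0$. If every $x\in N(v)\cap S$ had $\deg_{G[S]}(x)\ge 2$, then $S\setminus\{v\}$ would still be a TD-set: each such $x$ retains an $S$-neighbor, each neighbor of $v$ in $V\setminus S$ retains an $S$-neighbor because $|\epn(v,S)|=0$, and $v$ itself is dominated by any of its $S$-neighbors. This contradicts minimality, so some $x\in N(v)\cap S$ satisfies $\deg_{G[S]}(x)=1$, and by hypothesis $|\epn(x,S)|=0$. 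A symmetric argument applied to $x$ (removing $x$ instead of $v$) forces $\deg_{G[S]}(v)=1$ as well. Thus $\{v,x\}$ spans a component of $G[S]$ isomorphic to $K_2$, and neither endpoint has an $S$-external private neighbor.

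Now I perform the swap. Since $G$ is connected and $n\ge 3$, at least one endpoint of the edge $vx$, say $x$, has a neighbor $y$ outside $\{v,x\}$, and $y\notin S$ because $\deg_{G[S]}(x)=1$. Since $|\epn(x,S)|=0$, $y$ has a further $S$-neighbor $w\ne x$. I then consider $S'=(S\setminus\{v\})\cup\{y\}$. Using $|\epn(v,S)|=0$ and $|\epn(x,S)|=0$ one checks directly that $S'$ is a TD-set of the same size as $S$: $v$ is dominated by $x\in S'$, $y$ is dominated by $w\in S'$, $x$ is dominated by $y\in S'$, and every vertex of $V\setminus S$ formerly dominated by $v$ retains an $S'$-neighbor in $S\setminus\{v\}$. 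Comparing $G[S]$ and $G[S']$: the only edge of $G[S]$ incident to $v$ was $vx$, so $|E(G[S'])|-|E(G[S])|$ equals the number of edges from $y$ into $S\setminus\{v\}$ minus one. When $y$ can be chosen with $y\notin N(v)$, the vertex $v$ becomes an external private neighbor of $x$ in $S'$, which gives a strict increase in $\sum_{u\in S'}|\epn(u,S')|$ compared to $S$, contradicting the extremal choice of $S$.

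The main obstacle I expect is the case analysis needed to guarantee the swap strictly improves the extremal parameter. The awkward configurations are those in which $N(x)\setminus\{v\}\subseteq N(v)$ (so every candidate $y$ is also adjacent to $v$), and symmetrically $N(v)\setminus\{x\}\subseteq N(x)$. These conditions push $\{v,x\}$ toward being the centre of a clique-like structure, and it is precisely the hypothesis $G\not\cong K_n$, together with the connectivity of $G$, that rules out the case in which no useful swap exists. In such borderline situations one performs a secondary swap (for example exchanging $x$ for a neighbor of $v$, or iterating the argument on a further vertex of $S$), and the careful bookkeeping showing that in every sub-case the chosen extremal measure strictly improves is the technical heart of the proof.
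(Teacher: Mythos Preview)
The paper does not prove Theorem~\ref{t:BC}; it is quoted from~\cite{H00} as a known result and used as a black box in the proof of Theorem~\ref{t:generalbd}. So there is no proof in this paper to compare your attempt against. Your overall strategy---choose a minimum TD-set extremal with respect to some auxiliary measure, derive that a ``bad'' vertex $v$ sits in a $K_2$-component $\{v,x\}$ of $G[S]$ with $\epn(v,S)=\epn(x,S)=\emptyset$, then swap to reach a contradiction---is exactly the standard technique for this kind of statement and is almost certainly the approach in~\cite{H00} as well.

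That said, your specific extremal parameters do not work as written. You minimise $|E(G[S])|$ first and then maximise $\sum_{u\in S}|\epn(u,S)|$. But your swap $S'=(S\setminus\{v\})\cup\{y\}$ removes exactly the edge $vx$ and adds at least the edge $xy$, so $|E(G[S'])|\ge |E(G[S])|$ always; it can never contradict minimality of the primary parameter. You then invoke the secondary parameter in the case $y\notin N(v)$, but that is precisely the case where $|E(G[S'])|>|E(G[S])|$: since $\epn(x,S)=\emptyset$, the vertex $y$ has an $S$-neighbour other than $x$, and if $y\notin N(v)$ that neighbour lies in $S\setminus\{v,x\}$, giving a strict increase in the edge count. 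In a lexicographic ordering the secondary parameter is only consulted on a tie in the primary, and a tie forces $N(y)\cap S=\{v,x\}$, hence $y\in N(v)$---exactly the case you postpone. So the argument as stated never produces a contradiction.

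The easy repair is to reverse the primary criterion: among minimum TD-sets, \emph{maximise} $|E(G[S])|$ (or, more directly, maximise the number of $u\in S$ with $\epn(u,S)\ne\emptyset$). Then whenever $y$ has an $S$-neighbour outside $\{v,x\}$ you immediately contradict maximality, and the residual case $N(y)\cap S=\{v,x\}$ for every neighbour $y$ of $x$ (and symmetrically of $v$) forces $N[v]=N[x]$; the remaining analysis using connectivity and $G\not\cong K_n$ is then genuinely short. Your final paragraph correctly anticipates that this residual case is where the work lies, but with the parameters you chose, the ``easy'' case is not actually handled either.
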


Given a graph $G$, the set $L \cup T$, where $L$ is a locating-dominating set of $G$, and $T$ is a TD-set of $G$ is both a TD-set and a locating set, implying the following observation.

\begin{ob}\label{ob:D=T+L}
For every graph $G$ without isolated vertices, we have $\gLT(G) \le \gL(G)+\gamma_t(G)$.
\end{ob}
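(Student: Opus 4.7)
The plan is to take minimum witnesses for the two parameters on the right-hand side and verify that their union is a locating-total dominating set. Specifically, I would let $L$ be a minimum locating-dominating set of $G$ with $|L|=\gL(G)$, let $T$ be a minimum TD-set of $G$ with $|T|=\gt(G)$ (this is where the no-isolated-vertex hypothesis is used, to ensure $T$ exists), and set $S = L \cup T$. Then $|S| \le |L|+|T| = \gL(G)+\gt(G)$, so it suffices to show that $S$ is an LTD-set of $G$, from which the inequality $\gLT(G) \le |S|$ follows.

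The verification splits into two independent properties. For the total domination property, I would simply note that $T \subseteq S$ and $T$ already totally dominates $G$, so every vertex of $G$ has a neighbor in $T$ and hence in $S$. For the locating property, I would take two distinct vertices $u,v \in V(G)\setminus S$; since $S \supseteq L$, both $u$ and $v$ lie in $V(G)\setminus L$, so by the locating property of $L$ there exists a vertex $w \in L$ with $w \in (N(u) \cap L)\triangle (N(v) \cap L)$. Because $w \in L \subseteq S$, this same $w$ witnesses $N(u) \cap S \ne N(v) \cap S$, so $u$ and $v$ are located by $S$.

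There is essentially no obstacle here: the argument is a direct set-theoretic monotonicity observation (adding vertices to a locating set preserves the locating property for pairs that remain outside the set), and adding a locating-dominating set to a TD-set cannot destroy total domination. The only minor care needed is to ensure the pair $u,v$ being tested against $S$ is also outside $L$, which is automatic from $L \subseteq S$.
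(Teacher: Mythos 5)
Your proof is correct and follows essentially the same route as the paper, which simply observes that the union of a locating-dominating set $L$ and a TD-set $T$ is both a TD-set and a locating set. Your verification that enlarging a locating set preserves location for the pairs that remain outside (via a witness $w \in L \subseteq S$) is exactly the monotonicity the paper relies on implicitly.
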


\section{A general upper bound of three-quarters the order}
\label{sec:general}

In this section we prove a general upper bound on the location-total domination number of a graph in terms of its order. The proof is similar to the bound $\gL(G)\le \frac{2}{3}n$ proved for locating-dominating sets in~\cite{Heia}.% Recall our prior assumption that, unless otherwise stated, all graphs in this paper have no isolated vertices.

\begin{thm}
If $G$ is a twin-free graph of order~$n$ without isolated vertices, then $\gLT(G) \le \frac{3}{4}n$.
 \label{t:generalbd}
\end{thm}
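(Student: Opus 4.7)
My plan is to proceed by induction on the order $n$ of $G$, following the strategy used in~\cite{Heia} for the analogous bound $\gL(G)\le\frac{2}{3}n$. The base cases, for $n$ small (say $n\le 4$), can be checked directly by listing the few twin-free graphs without isolated vertices of that size and verifying the bound on each.

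For the inductive step, the core idea is to peel off a small substructure $H\subseteq V(G)$, typically of size $|H|=4$, together with a distinguished subset $A_H\subseteq V(H)$ of size $3$ with two properties: first, the set $A_H$ totally dominates and locates every vertex of $H$ within $G$; second, the graph $G'$ obtained from $G-H$ (possibly after a minor local modification such as adding or removing an edge between two vertices of $V(G)\setminus H$ to suppress newly created twins) is still twin-free and has no isolated vertex. Granting these two properties, the induction hypothesis supplies an LTD-set $D'$ of $G'$ of size at most $\frac{3}{4}(n-|H|)=\frac{3}{4}n-3$; then $D=D'\cup A_H$ is an LTD-set of $G$ of size at most $\frac{3}{4}n$, as required.

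The choice of $H$ is dictated by the local structure of $G$. When $G$ has a leaf $\ell$ adjacent to a support $s$, one builds $H$ around $\ell$ and $s$ together with a couple of further neighbors; when $G$ has no leaves but contains a degree-$2$ vertex sitting on a long path, $H$ is chosen around that vertex; in the remaining ``dense'' situations, Theorem~\ref{t:BC} supplies a minimum TD-set whose external private neighbors pin down a suitable $H$. In boundary configurations where $|H|<4$ or $|A_H|<3$ suffices, the bookkeeping only improves; and in the residual small configurations where no reducible module exists, Observation~\ref{ob:D=T+L} combined with Theorem~\ref{thm:TD-extremal} lets one finish by hand.

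The main obstacle is verifying the second property of $H$, namely that $G'$ remains twin-free and has no isolated vertex. Deleting a set of vertices from a twin-free graph can easily create new twin pairs (if two vertices of $V(G)\setminus H$ differed only in their adjacencies to $H$) and can produce isolated vertices (if some $v\notin H$ had all its neighbors in $H$). A careful case analysis, together with small local corrections (enlarging $H$ by one vertex, swapping a vertex of $H$, or tweaking an edge of $G'$), will be needed to avoid these pitfalls in each configuration. The looser target of $\frac{3}{4}$ instead of $\frac{2}{3}$ provides extra slack to absorb small losses in the bookkeeping, which makes each inductive step strictly easier than the corresponding one in~\cite{Heia} and is precisely why the $\frac{3}{4}n$ bound can be proved unconditionally, while the target bound of $\frac{2}{3}n$ from Conjecture~\ref{conj} remains open in general.
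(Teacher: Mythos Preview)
Your proposal is not a proof but a strategy sketch, and it has real gaps. First, you mischaracterize the method of~\cite{Heia}: that paper, like the present one, does \emph{not} argue by induction on $n$. It uses a direct counting argument built on a carefully chosen TD-set. So ``following the strategy used in~\cite{Heia}'' and then describing an inductive peel-off scheme is a mismatch from the start.

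More substantively, the inductive step you outline has two unresolved obstacles. (i) You allow yourself to ``tweak an edge of $G'$'' to suppress new twins created by deleting $H$. But an LTD-set of the \emph{modified} graph $G'$ need not be an LTD-set of $G-H$ (total domination and location are both sensitive to the exact edge set), so the induction does not close. Enlarging $H$ instead is safer, but then you must show the enlargement never runs away and that the $3{:}4$ budget is always maintained; you give no mechanism for this. (ii) You assert that $A_H$ ``locates every vertex of $H$ within $G$'', but the one vertex $v\in H\setminus A_H$ must be located against \emph{every} vertex of $V(G)\setminus(H\cup D')$, including vertices that may share $v$'s $A_H$-neighborhood. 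Nothing in your setup controls the edges between $H$ and $V(G)\setminus H$, so this is not guaranteed. The phrase ``a careful case analysis will be needed'' is precisely where the proof should be.

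For contrast, the paper's argument is global and non-inductive. Using Theorem~\ref{t:BC}, it starts from a minimum TD-set $S$ in which at least half the vertices have an $S$-external private neighbor, so the partition $\cP_S$ of $V(G)\setminus S$ into equal-$S$-neighborhood classes has at least $\tfrac12|S|$ parts. It then grows $S$ to a maximal $D$ preserving $n_1(D)+n_2(D)\ge\tfrac12|D|$, and exhibits two LTD-sets: $D\cup X_D$ of size $|D|+n_1$, and $D\cup Y_D'$ of size $n-n_1-n_2$. Taking the minimum and using $|D|\le 2(n_1+n_2)$ gives $\gLT(G)\le\tfrac34 n$ by a two-line calculation. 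No case analysis, no induction, no twin-repair.
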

\begin{proof} By linearity, we may assume that $G$ is connected. By the twin-freeness of $G$, we note that $n \ge 4$ and that $G \not\cong K_n$. For an arbitrary subset $S$ of vertices in $G$, let $\cP_S$ be a partition of $\barS = V(G) \setminus S$ with the property that all vertices in the same part of the partition have the same open neighborhood in $S$ and vertices from different parts of the partition have different open neighborhood in $S$. Let $|\cP_S| = k(S)$. Let $X_S$ be the set of vertices in $\barS$ that belong to a partition set in $\cP_S$ of size~$1$ and let $Y_S = \barS \setminus X_S$. Hence every vertex in $Y_S$ belongs to a partition set of size at least~$2$. Let $n_1(S) = |X_S|$ and let $n_2(S) = k(S) - n_1(S)$.
Let $S$ be a minimum TD-set in $G$ with the property that every vertex $v \in S$ satisfies $|\epn(v,S)| \ge 1$ or has a neighbor $v'$ in $S$ of degree~$1$ in $G[S]$ satisfying $|\epn(v',S)| \ge 1$. Such a set exists by Theorem~\ref{t:BC}. We note that at least half the vertices in $S$ have an $S$-external private neighbor, implying that $n_1(S) + n_2(S) \ge \frac{1}{2}|S|$.
Among all supersets $S'$ of $S$ with the property that $n_1(S') + n_2(S') \ge \frac{1}{2}|S'|$, let $D$ be chosen to be inclusion-wise maximal. (Possibly, $D = S$.)

\begin{clm}\label{claimA}
The vertices in each partition set of size at least~$2$ in $\cP_D$ have distinct neighborhoods in $X_D$, and $D \cup X_D$ is a LTD-set of $G$.
\end{clm}
\claimproof
Let $u$ and $v$ be two vertices that belong to a partition set $T$, of size at least~$2$ in $\cP_D$. Since $G$ is twin-free, there exists a vertex $w \notin \{u,v\}$ that is adjacent to exactly one of $u$ and~$v$.
Since $u$ and $v$ have the same neighbors in $D$, we note that $w \notin D$.
Hence, $w \in \barD = V(G) \setminus D$.
Suppose that $w \in Y_D$ and consider the set $D' = D \cup \{w\}$.
Let $R$ be an arbitrary partition set in $\cP_{D}$ that might or might not contain $w$.
If $w$ is either adjacent to every vertex of $R\setminus\{w\}$ or adjacent to no vertex in $R\setminus\{w\}$, then $R\setminus\{w\}$ is a partition set in $\cP_{D'}$.
If $w$ is adjacent to some, but not all, vertices of $R\setminus\{w\}$, then there is a partition $R\setminus\{w\} = (R_1,R_2)$ of $R\setminus\{w\}$ where $R_1$ are the vertices in $R\setminus\{w\}$ adjacent to $w$ and $R_2$ are the remaining vertices in $R\setminus\{w\}$.
In this case, both sets $R_1$ and $R_2$ form a partition set in $\cP_{D'}$.
In particular, we note that there is a partition $T\setminus\{w\} = (T_1,T_2)$ of $T\setminus\{w\}$ where both sets $T_1$ and $T_2$ form a partition set in $\cP_{D'}$.
Therefore, $n_1(D') + n_2(D') \ge n_1(D) + n_2(D) + 1 \ge \frac{1}{2}|D| + 1 > \frac{1}{2}(|D| + 1) = \frac{1}{2}|D'|$, contradicting the maximality of $D$. Hence, $w \notin Y_D$. Therefore, $w \in X_D$.
Hence, $u$ and $v$ are located by the set $X_D$ in $G$.
Moreover, $D\cup X_D$ is a TD-set since $D$ itself is a TD-set.~\smallqed

\medskip
Let $Y'_D$ be obtained from $Y_D$ by deleting one vertex from each partition set of size at least~$2$ in $\cP_D$, and let $D' = D \cup Y'_D$. Then, $|D'| = n - n_1(D) - n_2(D)$. By definition of the partition $\cP_D$, every vertex in $V(G) \setminus D'$ has a distinct nonempty neighborhood in $D$ and therefore in $D'$. Moreover, $D'$ is a TD-set since $D$ itself is a TD-set. Hence we have the following claim.

\begin{clm}\label{claimB}
The set $D'$ is a LTD-set of $G$.
\end{clm}

Let $n_1 = n_1(D)$ and $n_2 = n_2(D)$. By Claim~\ref{claimA}, the set $D \cup X_D$ is a LTD-set of $G$ of cardinality~$|D| + n_1$. By Claim~\ref{claimB}, the set $D'$ is a LTD-set of $G$ of cardinality~$n - n_1 - n_2$. Hence,
\begin{equation}
\gLT(G) \le \min \{ |D| + n_1, n - n_1 - n_2 \}.
\label{Eq1}
\end{equation}

Inequality~(\ref{Eq1}) implies that if $n - n_1 - n_2 \le \frac{3}{4}n$, then $\gL(G) \le \frac{3}{4}n$. Hence we may assume that $n - n_1 - n_2 > \frac{3}{4}n$, for otherwise the desired upper bound on $\gLT(G)$ follows. With this assumption, $n_1 + n_2 < \frac{1}{4}n$. By our choice of the set $D$, we recall that $|D| \le 2(n_1 + n_2)$. Therefore,
\[
|D| + n_1 \le 3n_1 + 2n_2 \le 3(n_1 + n_2) < \frac{3}{4}n.%\hfill\qedhere
\]

Hence, by Inequality~(\ref{Eq1}), $\gLT(G) < \frac{3}{4}n$. This completes the proof of Theorem~\ref{t:generalbd}.
\end{proof}

\section{Graphs without $4$-cycles}\label{sec:noC4}

In this section, we prove Conjecture~\ref{conj} for graphs with no $4$-cycles. We also characterize all graphs with no $4$-cycles that achieve the bound of Conjecture~\ref{conj}. Surprisingly, these are precisely those graphs that have no $4$-cycles and no twins and that are extremal for the bound on the total domination number from Theorem~\ref{thm:TD-extremal}. This is in stark contrast with Conjecture~\ref{conj-LD} for the location-domination number, where many graphs (without $4$-cycles) are known that are extremal for the conjecture but have much smaller domination number than one-half the order, see~\cite{Heia}.

\begin{thm}
Let $G$ be a twin-free graph of order $n$ without isolated vertices and $4$-cycles. Then, $\gLT(G) \le \frac{2}{3}n$. Further, $\gLT(G) = \frac{2}{3}n$ if and only if $G$ is isomorphic to a $6$-cycle or is the $2$-corona of some connected nontrivial graph that contains no $4$-cycles.
\label{t:no4cycle}
\end{thm}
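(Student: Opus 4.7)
The plan is to split the ``iff'' into two directions. For the extremal direction, I confirm both candidate families achieve $\gLT(G)=\frac{2}{3}n$. If $G=C_6$, any four consecutive vertices form an LTD-set of size $4=\frac{2}{3}n$, and since $\gt(C_6)=4$ by Theorem~\ref{thm:TD-extremal} this is sharp. If $G$ is the $2$-corona of a connected nontrivial graph $H$ without $4$-cycles, write $|V(H)|=k$ so that $n=3k$; then $D=\{x_v,y_v:v\in V(H)\}$ is an LTD-set of size $2k$ (each $v\in V(H)\setminus D$ has $N(v)\cap D=\{x_v\}$, pairwise distinct), and the lower bound $\gLT(G)\ge 2k$ follows because every leaf $y_v$ forces $x_v$ into any TD-set, and every $x_v$, having only neighbors $v$ and $y_v$, forces at least one of $v,y_v$ into any TD-set. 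One checks that such $2$-coronas are twin-free (using $|V(H)|\ge 2$) and that $4$-cycles of the $2$-corona lie entirely within $H$, so the $2$-corona inherits the no-$4$-cycle property from $H$.

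For the general upper bound, I assume $G$ is neither $C_6$ nor such a $2$-corona. Since $C_3$ has closed twins, $G\ne C_3$, so Theorem~\ref{thm:TD-extremal} yields $\gt(G)<\frac{2}{3}n$. Following the strategy of the proof of Theorem~\ref{t:generalbd}, I fix a minimum TD-set $S$ with the property of Theorem~\ref{t:BC} and extend it to a superset $D\supseteq S$ that is maximal with $n_1(D)+n_2(D)\ge\frac{1}{2}|D|$. This yields two LTD-sets of sizes $|D|+n_1(D)$ and $n-n_1(D)-n_2(D)$. The critical $4$-cycle-free observation is that if $u,v\in V(G)\setminus D$ satisfy $N(u)\cap D=N(v)\cap D$, then this intersection has size exactly one, since two distinct common $D$-neighbors $w_1,w_2$ would close a $4$-cycle on $u,w_1,v,w_2$. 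Hence each non-singleton class of $\cP_D$ is contained in some $\epn(w,D)$ and has at least two elements, yielding $|Y_D|\ge 2n_2(D)$. Combining this structural strengthening with the strict inequality $|D|<\frac{2}{3}n$ and with $|D|\le 2(n_1(D)+n_2(D))$, the computation from the proof of Theorem~\ref{t:generalbd} should refine to give $\min\{|D|+n_1(D),\,n-n_1(D)-n_2(D)\}\le\frac{2}{3}n$, which is the desired bound.

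For the equality characterization, if $\gLT(G)=\frac{2}{3}n$ then the strict inequality obtained in the second step forces $G$ to be $C_6$ or a $2$-corona of a connected nontrivial graph, and twin-freeness rules out $C_3$ (the third extremal family of Theorem~\ref{thm:TD-extremal}); the underlying graph $H$ of the $2$-corona must be $4$-cycle-free since any $4$-cycle of the $2$-corona lies inside $H$. The main obstacle will be the refinement in the second step: improving the $\frac{3}{4}n$ bound of Theorem~\ref{t:generalbd} down to $\frac{2}{3}n$ under the $4$-cycle-free hypothesis. In the ``bad'' configuration where $|D|+n_1(D)>\frac{2}{3}n$ and $n-n_1(D)-n_2(D)>\frac{2}{3}n$ hold simultaneously, neither of the two natural LTD-sets suffices, so one must exploit $4$-cycle-freeness beyond just the observation that every non-singleton class lies in a single $\epn(w,D)$ --- perhaps by a further modification of $D$ (for instance by swapping certain $w\in D$ for their external private neighbors to reduce $n_2(D)$) or by arguing directly that this bad configuration would force $G$ to be a $2$-corona, contradicting the standing assumption.
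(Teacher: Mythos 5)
Your extremal-direction verification and your observation that 4-cycle-freeness forces every non-singleton class of $\cP_D$ to lie inside a single $\epn(w,D)$ (so $|Y_D|\ge 2n_2(D)$) are both correct, but the heart of the proof — getting from the $\frac34 n$ machinery of Theorem~\ref{t:generalbd} down to $\frac23 n$ — is not actually carried out, and the refinement you hope for does not close numerically. The constraints available to you are $|D|\le 2\bigl(n_1+n_2\bigr)$, $n=|D|+n_1+|Y_D|$ with $|Y_D|\ge 2n_2$, and the assumed failure $n_1+n_2<\frac13 n$ of the second LTD-set; these are simultaneously consistent with $|D|+n_1>\frac23 n$ (for instance $n_1=\frac{n}{4}$, $n_2=\frac{n}{20}$, $|D|=\frac{n}{2}$, $|Y_D|=\frac{n}{4}$ satisfies every inequality, yet both candidate LTD-sets have size at least $0.7n$). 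So the ``bad configuration'' you flag at the end is a genuine open case, not a loose end, and your suggested remedies (swapping vertices of $D$ for private neighbors, or arguing the configuration forces a $2$-corona) are not developed; the second one is essentially the whole difficulty of the theorem restated. The equality characterization inherits the same problem: a global counting argument of this kind gives no structural handle from which to recover the precise extremal families.

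The paper takes an entirely different route: induction on $n$, partitioning $V(G)$ into $V_1$ (a shortest cycle — or, for trees, a maximal subtree near the end of a longest path — together with the small components of $G-V(C)$ and carefully chosen twin representatives) and $V_2=V(G)\setminus V_1$, applying the induction hypothesis to the twin-free graph $G[V_2]$, and building an explicit set on $V_1$ of size at most $\frac23|V_1|$. The no-$4$-cycle hypothesis enters not through neighborhood counting but through the locating check: any vertex dominated by one vertex of $D_1$ and one of $D_2$ is automatically located, since two unlocated vertices sharing two dominators would close a $4$-cycle. The induction also delivers the equality characterization, because equality forces equality on $G[V_2]$, whose extremal structure ($6$-cycles or $2$-coronas) is then shown to propagate to $G$. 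If you want to pursue your approach, you would need a genuinely new idea for the bad configuration; otherwise I would recommend switching to a decomposition-plus-induction argument.
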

\begin{proof}
We prove the theorem by induction on $n$. By linearity, we may assume that $G$ is connected, for otherwise we apply induction to each component of $G$ and we are done. By the twin-freeness of $G$, we note that $n \ge 4$. Further if $n = 4$, then since $G$ is $C_4$-free, the graph $G$ is the path $P_4$ and $\gLT(P_4)=2 < \frac{2}{3}n$. This establishes the base case. Let $n \ge 5$ and assume that every twin-free graph $G'$ without isolated vertices and with no $4$-cycles of order~$n'$, where $n' < n$, satisfies $\gLT(G') \le \frac{2}{3}n'$, and that the only graphs achieving the bound are the extremal graphs described in Theorem~\ref{thm:TD-extremal} that are twin-free and have no $4$-cycles. Let $G$ be a twin-free graph without isolated vertices and with no $4$-cycles of order~$n$. The general idea will be to partition $V(G)$ into two sets $V_1$ and $V_2$. If $G[V_1]$ and/or $G[V_2]$ are twin-free, we apply induction, and use the obtained LTD-sets of $G[V_1]$ and/or $G[V_2]$ to build one of $G$. We proceed further with the following series of claims.

\medskip
\begin{clm}\label{claimtree}
%The desired result follows if $G$ is a tree.
If $G$ is a tree, then $\gLT(G)\le \frac{2}{3}n$. Further, $\gLT(G) = \frac{2}{3}n$ if and only if $G$ is the $2$-corona of a nontrivial tree.
\end{clm}
%\claimproof
\noindent
\emph{Proof of Claim~\ref{claimtree}.}
Suppose that $G$ is a tree. Since $n \ge 5$, we note that $\diam(G) \ge 4$ (otherwise $G$ contains twin vertices of degree~$1$). For the same reason, if $\diam(G) = 4$, then either $G = P_5$ or $G$ is obtained from a star $K_{1,k+1}$, where $k \ge 2$, by subdividing at least~$k$ edges of the star exactly once. In this case, the set of vertices of degree at least~$2$ in $G$ forms a LTD-set of size strictly less than two-thirds the order. Hence, we may assume that $\diam(G) \ge 5$, for otherwise the desired result follows.

Let $P$ be a longest path in $G$ and let $P$ be an $(r,u)$-path. Necessarily, both $r$ and $u$ are leaves. Since $\diam(G) \ge 5$, we note that $P$ has length at least~$5$. We now root the tree at the vertex $r$. Let $v$ be the parent of $u$, and let $w$ be the parent of $v$, $x$ the parent of $w$, and $y$ the parent of $x$ in the rooted tree. Since $|V(P)| \ge 6$, we note that $y \ne r$. Since $G$ is twin-free, the vertex $w$ has at most one leaf-neighbor and every child of $w$ that is not a leaf has degree~$2$ in $G$. In particular, $d_G(v)=2$.
We now consider the subtree $G_w$ of $G$ rooted at the vertex~$w$. If $d_G(w) = 2$, then $G_w = P_3$, while if $d_G(w) \ge 3$, then $G_w$ is obtained from a star $K_{1,k+1}$, where $k \ge 1$, by subdividing at least~$k$ edges of the star exactly once. Let $G' = G - V(G_w)$.

We now define the subtrees $G_1$ and $G_2$ of $G$ as follows. We distinguish two cases; in both of them, $G_2$ is twin-free.

\begin{itemize}
\item If the tree $G'$ is twin-free, then we let $V_1 = V(G_w)$ and $V_2 = V(G) \setminus V_1$, and we let $G_1 = G[V_1]$ and $G_2 = G[V_2]$. We note that in this case, $G_2 = G'$.
\item If the tree $G'$ is not twin-free, then necessarily, the parent $x$ of $w$ has a twin $x'$ in $G'$, and $N_G(x') = N_G(x) \setminus \{w\} = \{y\}$. Thus, $d_G(x) = 2$ and the vertex $x'$ is a leaf-neighbor of $y$ in $G$. Moreover, we claim that if $x'=r$, then we are done. Indeed, in this case, our choice of $P$ as a longest path in $G$ implies that $G'$ is the path $ryx$. If now $G_w \ne P_3$, then the set of vertices of degree at least~$2$ in $G$ forms a LTD-set of $G$ of size strictly less than two-thirds the order, while if $G_w = P_3$, then $G$ is the path $P_6$, which is the $2$-corona of a tree $K_2$, and $\gLT(G) = \frac{2}{3}n$. In both cases we are done. Thus, we may assume that $x' \ne r$. We now let $V_1 = V(G_w) \cup \{x\}$, $V_2 = V(G) \setminus V_1$, and we let $G_1 = G[V_1]$ and $G_2 = G[V_2]$. We note that in this case, $G_2 = G' - x$. Our assumption that $x' \ne r$ implies that $G_2$ is a twin-free tree.
\end{itemize}
%In both cases, we let $G_2$ have order~$n_2$.

Let $D_2$ be a minimum LTD-set of $G_2$. Applying the induction hypothesis to the twin-free tree $G_2$, the set $D_2$ satisfies $|D_2| \le \frac{2}{3}|V_2|$. Further, if $|D_2| = \frac{2}{3}|V_2|$, then $G_2$ is the $2$-corona of a nontrivial tree. Let $D_1$ consist of $w$ and every child of $w$ of degree~$2$. Then, $|D_1| \le \frac{2}{3}|V_1|$ with strict inequality if $G_1$ is not the path $uvw$. We claim that $D = D_1 \cup D_2$ is a LTD-set of $G$. Since $D_1$ and $D_2$ are TD-sets of $G_1$ and $G_2$, respectively, the set $D$ is a TD-set of $G$. Every vertex of $G$ is located by $D$ except possibly for the vertex~$x$ and a leaf-neighbor of $w$ in $G$, if such a leaf-neighbor exists. If $x \in V(G_2)$, then it is located in $G_2$ and hence in $G$. If $x \in V(G_1)$, then its twin $x'$ in $G'$ is a leaf-neighbor of $y$, implying that in $G_2$ the support vertex $y \in D_2$. Thus, $x$ is located by $w$ and $y$. If $w$ has a leaf-neighbor in $G$, then such a leaf-neighbor is located by $w$ only. Therefore, $D$ is a LTD-set of $G$, and so
\begin{equation}
\gLT(G) \le |D| = |D_1| + |D_2| \le \frac{2}{3}|V_1| + \frac{2}{3}|V_2| = \frac{2}{3}n.
\label{Eq2}
\end{equation}

This establishes the desired upper bound. Suppose next that $\gLT(G) = \frac{2}{3}n$. Then we must have equality throughout the Inequality Chain~(\ref{Eq2}). In particular, $|D_1| = \frac{2}{3}|V_1|$ and $|D_2| = \frac{2}{3}|V_2|$, implying that $G_1 = P_3$ (and $G_1$ consists of the path $uvw$) and $G_2$ is the $2$-corona of a nontrivial tree, say $T_2$. Let $A$ and $B$ be the set of leaves and support vertices, respectively, in $G_2$, and let $C$ be the remaining vertices of $G_2$. We note that $C = V(T_2) = V_2 \setminus (A \cup B)$ and $|C| \ge 2$ (since $T_2$ is a nontrivial tree). If $x \in A$, then $x$ is a leaf in $G_2$ and its neighbor $y$ is a support vertex in $G_2$ and belongs to the set $B$. If $x \in B$, then $x$ is a support vertex in $G_2$ and its parent $y$ belongs to $C$. In both cases, the set $(B \cup C \cup \{v,w\}) \setminus \{y\}$ is a LTD-set of $G$ of size~$|D_1| + |D_2| - 1 = \frac{2}{3}n - 1$, a contradiction to our supposition that $\gLT(G) = \frac{2}{3}n$. Hence, $x \in C$, implying that $G$ is the $2$-corona of a nontrivial tree, namely the tree $G[C \cup \{w\}]$ obtained from $T_2$ by adding to it the vertex $w$ and the edge $wx$. This completes the proof of Claim~\ref{claimtree}.~\smallqed

\medskip
By Claim~\ref{claimtree}, we may assume that $G$ is not a tree, for otherwise the desired result follows. Hence, $G$ contains a cycle. We consider next the case when $G$ contains a triangle.

\medskip
\begin{clm}\label{c:triangle}
%The desired result follows if $G$ is a tree.
If $G$ contains a triangle, then $\gLT(G) \le \frac{2}{3}n$. Further, $\gLT(G) = \frac{2}{3}n$ if and only if $G$ is isomorphic to a $6$-cycle or is the $2$-corona of some connected nontrivial graph that contains no $4$-cycles but contains a triangle.
\end{clm}
%\claimproof
\noindent
\emph{Proof of Claim~\ref{c:triangle}.} Suppose that $G$ contains a triangle $C$.
%$C \colon uvwu$.
Let $G' = G - V(C)$. We build a subset $V_1$ of vertices of $G$ as follows. Let $V_0$ consist of $V(C)$ together with all vertices that belong to a component $C'$ of $G'$ isomorphic to $P_1$, $P_2$ or $P_3$. We remark that if $C'$ is a $P_1$- or $P_2$-component of $G'$, then at most one edge joins it to $C$, for otherwise there would be a $4$-cycle or a pair of twins in $G$.
Suppose that $S$ is a set of mutual twins of $G - V_0$. Since $G$ is twin-free, all but possibly one vertex in $S$ must be adjacent to a vertex of $C$. For each such set $S$ of mutual twins of $G - V_0$, we select $|S|-1$ vertices from $S$ that have a neighbor in $C$, and add these vertices to the set $V_0$ to form the set $V_1$ (possibly, $V_1 = V_0$). Let $V_2 = V(G)\setminus V_1$. Let $G_1 = G[V_1]$ and if $V_2 \ne \emptyset$, let $G_2 = G[V_2]$. We note that $G_1$ is connected, while $G_2$ may possibly be disconnected.

\begin{unnumbered}{Subclaim~8.B.1}
$G_2$ is twin-free and has no isolated vertices.
\end{unnumbered}
\emph{Proof of Subclaim~8.B.1.} We first prove that $G_2$ is twin-free. Suppose, to the contrary, that there is a pair $\{t,t'\}$ of twins in $G_2$.
By construction of $V_2$, the vertices $t$ and $t'$ are not twins in $G - V_0$, implying that there exists a vertex $v$ in $V_1 \setminus V_0$ such that $v$ is adjacent to exactly one of $t$ and $t'$, say to $t$. Let $v'$ be the twin of $v$ in $G-V_0$ that was not added to the set $V_1$ (recall that by construction, all but one vertex from a set of mutual twins in $G - V_0$ is added to the set $V_1$). But then, $v'$ is a vertex in $G_2$ that is adjacent to $t$ but not to $t'$, contradicting our supposition that $t$ and $t'$ are twins in $G_2$. Therefore, $G_2$ is twin-free. The proof that $G_2$ has no isolated vertices, again by the construction, an isolated vertex $x$ would have been a neighbor of a set of twins of $G-V_0$. But at least one twin still belongs to $G_2$, and $x$ is not isolated.~\smallqed

\medskip By Subclaim~8.B.1, $G_2$ is twin-free.  Let $D_2$ be a minimum LTD-set of $G_2$. Applying the induction hypothesis to each component of $G_2$, the set $D_2$ satisfies $|D_2| \le \frac{2}{3}|V_2|$. Further, if $|D_2| = \frac{2}{3}|V_2|$, then each component of $G_2$ is isomorphic to a $6$-cycle or is the $2$-corona of some connected nontrivial graph that contains no $4$-cycles.

We note that the graph $G_1$ could have twins. For example, this would occur if $V_1 = V(C)$, in which case $G_1$ is the $3$-cycle $C$. A more complicated possibility is if there were twins $t$ and $t'$ in $G - V_0$; then at least one of them belongs to $G_1$ and could be, in $G_1$, a twin with the vertex of some $P_1$-component of $G'$. Let us build a set $D_1\subset V_1$. As observed earlier, if $C'$ is a $P_1$- or $P_2$-component of $G'$, then at most one edge joins it to $C$. For every $P_3$-component $C'$ of $G'$, select the central vertex of $C'$ and one of its neighbors in $C'$ that is not a leaf in $G$ and add these two vertices of $C'$ to $D_1$. For every $P_2$-component $C'$ of $G'$, add to $D_1$ the unique vertex of $C'$ adjacent to a vertex of $C$, as well as its neighbor in $C$. For every $P_1$-component of $G'$ consisting of a vertex $v'$, add to $D_1$ the unique neighbor of $v'$ in $C$. For every vertex in $V_1 \setminus V_0$ that had a twin in $G - V_0$, add its neighbor in $C$ to $D_1$.
% We note that if $t$ and $t'$ are two vertices in $V_1 \setminus V_0$ that are twins in $G - V_0$, then $t$ and $t'$ have no common neighbor on $C$; indeed, $t$ and $t'$ already have one common neighbor in $G_2$, since they do not belong to a $P_1$- or $P_2$-component of $G'$; hence, another common neighbor would imply the existence of a $4$-cycle.
Now, if there is at most one vertex of $C$ in the resulting set $D_1$, then we augment $D_1$ so that exactly two vertices of $C$ belong to $D_1$. By construction the resulting set $D_1$ is a TD-set of $G_1$ and $|D_1| \le \frac{2}{3}|V_1|$.
% Further, if $|D_1| = \frac{2}{3}|V_1|$, then $G_1$ consists only of the $3$-cycle $C$.

\begin{unnumbered}{Subclaim~8.B.2}
$D = D_1 \cup D_2$ is a LTD-set of $G$.
\end{unnumbered}
\emph{Proof of Subclaim~8.B.2.} Since $D_1$ and $D_2$ are TD-sets of $G_1$ and $G_2$, respectively, the set $D$ is a TD-set of $G$. Suppose, to the contrary, that $D$ is not locating. Then there is a pair of vertices, $u$ and $v$, that is not located by $D$. If $(u,v) \in V_1 \times V_2$ (that is, $u \in V_1$ and $v \in V_2$), then $u$ is dominated by a vertex of $D_1$ and $v$ is dominated by a vertex of $D_2$. Hence, $u$ and $v$ must both be dominated by these two vertices. But then we have a $4$-cycle in $G$, a contradiction. Hence, $(u,v) \notin V_1 \times V_2$. Analogously, $(u,v) \notin V_2 \times V_1$. Since $D_2$ is locating in $G_2$, we note that $(u,v) \notin V_2 \times V_2$. Hence, $(u,v) \in V_1 \times V_1$; that is, both $u$ and $v$ belong to $G_1$. Moreover $u$ cannot belong to $C$, for otherwise $u$ is dominated by two vertices in $D_1\cap C$ and is located. Similarly, $v\notin C$. Analogously, $u$ and $v$ cannot belong to a $P_1$-, $P_2$- or $P_3$-component of $G'$, for otherwise it would be the only vertex in $V(G)\setminus D$ that is dominated only by its unique neighbor in $D_1$. Therefore, both $u$ and $v$ belong to $V_1 \setminus V_0$ and had a twin in $G - V_0$. Let $u'$ be the twin of $u$ in $G-V_0$ that was not added to the set $V_1$, and so $u' \in V_2$. If $u$ and $u'$ are open twins in $G - V_0$, then $u'$ is a vertex of degree~$1$ in $G$, for otherwise $u$ and $u'$ belong to a $4$-cycle. For the same reason, if $u$ and $u'$ are closed twins, then $u'$ has degree~$2$ in $G$. In both cases, $u'$ has degree~$1$ in $G_2$. The unique common neighbor of $u$ and $u'$ therefore belongs to $D_2$ in order to totally dominate the vertex $u'$ in $G_2$. Thus, $u$ is dominated by a vertex of $D_1$ and a vertex of $D_2$. Since $u$ and $v$ are not located, $v$ is also dominated by these two vertices, which implies that $u$ and $v$ belong to a common $4$-cycle of $G$, a contradiction. Therefore, $D$ is a LTD-set of $G$.~\smallqed

\medskip
By Subclaim~8.B.2, the set $D = D_1 \cup D_2$ is a LTD-set of $G$, implying that the Inequality Chain~(\ref{Eq2}) presented in the proof of Claim~\ref{claimtree} holds. This establishes the desired upper bound.

Suppose next that $\gLT(G) = \frac{2}{3}n$. Then we must have equality throughout the Inequality Chain~(\ref{Eq2}). In particular, $|D_1| = \frac{2}{3}|V_1|$ and $|D_2| = \frac{2}{3}|V_2|$.
Since $|D_1| = \frac{2}{3}|V_1|$, our construction of the set $D_1$ implies that no component of $G'$ is isomorphic to $P_1$ and that $V_1 = V_0$. Further, if $G'$ contains a $P_2$-component, then it has exactly three $P_2$-components each being joined via exactly one edge to a distinct vertex of $C$. In addition, there may be some, including the possibility of none, $P_3$-components in $G'$. Suppose that $P'$ is a $P_3$-component in $G'$ and $x$ is a vertex of $P'$ that is adjacent to a vertex of $C$. Then, $x$ is a leaf of $P'$ and is adjacent to exactly one vertex of $C$, since $G$ is twin-free and has no $4$-cycles. Suppose, further, that both leaves of $P'$ are adjacent to (distinct) vertices of $C$. Let $u$ and $v$ be two (distinct) vertices of $C$ joined to $P'$. If exactly one of $u$ and $v$ belong to $D_1$, then by our earlier observations, $G'$ contains no $P_2$-component. But then by the way in which the set $D_1$ is constructed and recalling that $G'$ contains no $P_1$-component and that $V_1 = V_0$, we would have chosen two arbitrary vertices of $C$ to add to $D_1$. Hence, we can replace the two vertices of $C$ that currently belong to $D_1$ with the two vertices $u$ and $v$. We may therefore assume that $D_1$ is chosen to contain both $u$ and $v$. With this assumption, we can replace the two vertices of $P'$ that currently belong to $D_1$ with one of the leaves of $P'$ to produce a new LTD-set of $G$ of size $|D| - 1 = \gLT(G) - 1$, a contradiction. Therefore, $P'$ is joined via exactly one edge to a vertex of $C$. Thus, there are two possible structures of the graph $G_1$, described as follows.

\begin{unnumbered}{Structure~1.}
The graph $G_1$ is obtained from the $3$-cycle $C$ by adding any number of vertex-disjoint copies of $P_3$, including the possibility of zero, and joining an end from each such added path to exactly one vertex of $C$.
\end{unnumbered}

\begin{unnumbered}{Structure~2.}
The graph $G_1$ is obtained from the $2$-corona of the $3$-cycle $C$ by adding any number of vertex-disjoint copies of $P_3$, including the possibility of zero, and joining an end from each such added path to exactly one vertex of $C$.
\end{unnumbered}

We note that if $G_1$ has the structure described in Structure~2, then $G_1$ is the $2$-corona of some connected nontrivial graph, say $H_1$, that contains the triangle $C$ and contains no $4$-cycles. Further we note that if $x \in V(H_1)$, then either $x \in V(C)$ or $x$ is the vertex of a $P_3$-component in $G'$ that is adjacent to a vertex of $C$.

\begin{unnumbered}{Subclaim~8.B.3}
If $G = G_1$,  then the graph $G$ is the $2$-corona of some connected nontrivial graph that contains the triangle $C$ and contains no $4$-cycles.
\end{unnumbered}
\emph{Proof of Subclaim~8.B.3.}   Suppose that $G = G_1$, i.e., $V_2=\emptyset$. We first show that $G$ has the structure described in Structure~2. Suppose to the contrary that $G$ has the structure described in Structure~1. Then, since $G$ is twin-free, the graph $G$ is obtained from the $3$-cycle $C$ by adding $k \ge 2$ vertex-disjoint copies of $P_3$ and joining an end from each such added path to exactly one vertex of $C$. Further, by the twin-freeness of $G$, at least two vertices of $C$ are joined to an end of an added path. Let $u$ and $v$ be two (distinct) vertices of $C$ are joined to ends of added paths $P_3$. The set of $2k$ vertices of degree~$2$ in $G$ that belong to added paths, together with the vertex~$u$, forms a LTD-set of $G$ of size~$\frac{2}{3}n - 1$, a contradiction. Therefore, $G$ has the structure described in Structure~2. Thus, the graph $G$ is the $2$-corona of some connected nontrivial graph that contains the triangle $C$ and contains no $4$-cycles.~\smallqed

\medskip
By Subclaim~8.B.3, we may assume that $G \ne G_1$, for otherwise the desired result follows. Hence, $V_2 \ne \emptyset$. Since $|D_2| = \frac{2}{3}|V_2|$, applying the inductive hypothesis to each component of $G_2$, we deduce that each component of $G_2$ is isomorphic to a $6$-cycle or is the $2$-corona of some connected nontrivial graph that contains no $4$-cycles.

\begin{unnumbered}{Subclaim~8.B.4}
No component of $G_2$ is isomorphic to a $6$-cycle.
\end{unnumbered}
\emph{Proof of Subclaim~8.B.4.}   Suppose, to the contrary, that $G_2$ contains a component $C'$ that is isomorphic to a $6$-cycle. Since $G$ is connected, there is an edge that joins a vertex $x \in V(C)$ and a vertex $y \in V(C')$. Let $C'$ be given by $y_1 y_2 \ldots y_6 y_1$, where $y = y_1$. If $G_1$ has the structure described in Structure~1, then we can choose $D_1$ to contain any two vertices of $C$. Hence we may assume that in this case, $D_1$ is chosen to contain the vertex~$x$. If $G_1$ has the structure described in Structure~2, then $V(C) \subset D_1$. In particular, $x \in D_1$. Hence, in both cases, $x \in D_1$. Replacing the four vertices of $D$ that belong to the component $C'$ with the three vertices $\{y_3,y_4,y_5\}$ produces a LTD-set of $G$ of size~$|D| - 1 = \frac{2}{3}n - 1$, a contradiction.~\smallqed

\medskip
By Subclaim~8.B.4, each component of $G_2$ is the $2$-corona of some connected nontrivial graph that contains no $4$-cycles, implying that the graph $G_2$ is the $2$-corona of some graph, say $H_2$, that contains no $4$-cycles. Moreover, since $G_2$ is twin-free, each component of $H_2$ is nontrivial. Let $A_2$ and $B_2$ be the set of leaves and support vertices, respectively, in $G_2$, and let $C_2$ be the remaining vertices of $G_2$. We note that $C_2 = V(H_2) = V_2 \setminus (A_2 \cup B_2)$.

%Hence, $G_1 \ne C$, and so implying that $G_1$ is the $2$-corona of $H_1$. Let $A_1$ and $B_1$ be the set of leaves and support vertices, respectively, in $G_1$, and let $C_1$ be the remaining vertices of $G_1$. We note that, $C_1 = V(H_1) = V_1 \setminus (A_1 \cup B_2)$.

\begin{unnumbered}{Subclaim~8.B.5}
$G_1$ has the structure described in Structure~2.
\end{unnumbered}
\emph{Proof of Subclaim~8.B.5.}   Suppose, to the contrary, that $G_1$ has the structure described in Structure~1. Then, the graph $G_1$ is obtained from the $3$-cycle $C$ by adding $k \ge 0$ vertex-disjoint copies of $P_3$ and joining an end from each such added path to exactly one vertex of $C$. Let $V(C) = \{u,v,w\}$. If at least two vertices of $C$ are joined to an end of an added path, then analogously as in the proof of Subclaim~8.B.3, we produce a LTD-set of $G$ of size~$\frac{2}{3}n - 1$, a contradiction. Hence, either $G_1 = C_3$ or $G_1$ is obtained from the $3$-cycle $C$ by adding $k \ge 1$ vertex-disjoint copies of $P_3$ and joining an end from each such added path to the same vertex of $C$, say to $u$. In both cases, both $v$ and $w$ have degree~$2$ in $G_1$. Since $G$ is twin-free, at least one of $v$ and $w$, say $v$, is adjacent to a vertex of $V_2$. If $v$ is adjacent to a vertex of $A_2 \cup B_2$, then an analogous argument as in the last paragraph of the proof of Claim~\ref{claimtree} produces a LTD-set of $G$ of size~$\frac{2}{3}n - 1$, a contradiction. Hence, the neighbors of $v$ in $V_2$ all belong to $C_2$. Analogously, the neighbors of $u$ and $w$ in $V_2$, if any exist, all belong to $C_2$.
The set of $2k$ vertices of degree~$2$ in $G$ that belong to the added $P_3$-paths in $G_1$, together with the set $B_2 \cup C_2 \cup \{v\}$, is a LTD-set of $G$ of size~$\frac{2}{3}n - 1$, a contradiction.~\smallqed

\medskip
By Subclaim~8.B.5, $G_1$ has the structure described in Structure~2, implying that $G_1$ is the $2$-corona of some connected nontrivial graph, say $H_1$, that contains the triangle $C$ and contains no $4$-cycles. Let $A_1$ and $B_1$ be the set of leaves and support vertices, respectively, in $G_1$, and let $C_1$ be the remaining vertices of $G_1$. We note that $C_1 = V(H_1) = V_1 \setminus (A_1 \cup B_2)$.

Since $G$ is connected, there is an edge in $G$ joining a vertex $x \in V_1$ and a vertex $y \in V_2$. Let $a_1b_1c_1$ be the path in $G_1$ containing $x$, where $a_1 \in A_1$, $b_1 \in B_1$ and $c_1 \in C_1$. Similarly, let $a_2b_2c_2$ be the path in $G_2$ containing $y$, where $a_2 \in A_2$, $b_2 \in B_2$ and $c_2 \in C_2$. We show that $x = c_1$. Suppose, to the contrary, that $x \in \{a_1,b_1\}$. Let $D^* = C_1 \cup C_2 \cup B_1 \cup B_2$.
If $xy=a_1a_2$, let $X = (D^* \cup \{a_1,a_2\}) \setminus \{b_1,b_2,c_1\}$.
If $xy \in \{a_1b_2,a_1c_2\}$, let $X = (D^* \cup \{a_1\}) \setminus \{b_1,c_1\}$.
If $xy=b_1a_2$, let $X = (D^* \cup \{a_2\}) \setminus \{b_2,c_2\}$.
If $xy=b_1b_2$, let $X = D^* \setminus \{c_2\}$.
If $xy=b_1c_2$, let $X = D^* \setminus \{c_1\}$.
Note that in all cases, $X$ is clearly a TD-set. To see that it is also locating, we observe that any vertex of $G_i$, $i \in [2]$, not in $X$ has a neighbor in $X \cap V(G_i)$ (to this end, also recall that $H_1$ and $H_2$ have no isolated vertices). Thus, if we had two vertices that are not located by $X$, we would have a $4$-cycle in $G$, a contradiction.
Hence, in each case the set $X$ is a LTD-set of $G$ of size~$|D| - 1 = \frac{2}{3}n - 1$, a contradiction. Therefore, $x = c_1$. Analogously, $y = c_2$. This is true for every edge $xy$ joining a vertex $x \in V_1$ and a vertex $y \in V_2$, implying that $G$ is the $2$-corona of some connected nontrivial graph that contains no $4$-cycles but contains a triangle. This completes the proof of Claim~\ref{c:triangle}.~\smallqed

\bigskip
By Claim~\ref{c:triangle}, the graph $G$ contains no triangle, for otherwise the desired result follows. Hence, the girth of $G$ is at least~$5$. Let $C \colon u_0u_1 \ldots u_{k-1} u_0$ ($k\geq 5$) be a smallest cycle in $G$. Let $G' = G - V(C)$.
We build a subset $V_1$ of vertices of $G$ as follows (similarly to the proof of Claim~\ref{c:triangle}). Let $V_0$ consist of $V(C)$ together with all vertices that belong to a component of $G'$ isomorphic to $P_1$, $P_2$ or $P_3$. Since $G$ is twin-free and has girth at least~$5$, we note that $G[V_0]$ is twin-free.
Suppose that $S$ is a set of mutual twins of $G - V_0$. Since $G$ is twin-free, all but possibly one vertex in $S$ must be adjacent to a vertex of $C$. For each such set $S$ of mutual twins of $G - V_0$, we select $|S|-1$ vertices from $S$ that have a neighbor in $C$, and add these vertices to the set $V_0$ to form the set $V_1$ (possibly, $V_1 = V_0$). Let $T = V_1 \setminus V_0$. We note that since $G$ has girth at least~$5$, the vertices in each set $S$ of mutual twins of $G - V_0$ are open twins, and have degree~$1$ in $G - V_0$ (if they were closed twins, they could not have a common neighbor since $G$ has girth at least~$5$, but then they would form a $P_2$-component of $G'$). Moreover they can have at most one neighbor in $V_0$, for otherwise they would have two or more neighbors in $V(C)$, but this would create a shorter cycle than $C$, contradicting its minimality. Hence, every vertex in $T$ has exactly one neighbor in $V_0$ (more precisely, in $V(C)$). Let $V_2 = V(G)\setminus V_1$. Let $G_1 = G[V_1]$ and if $V_2 \ne \emptyset$, let $G_2 = G[V_2]$.

\medskip
\begin{clm}\label{c:G1}
If $G = G_1$, then $\gLT(G) \le \frac{2}{3}n$. Further, $\gLT(G) = \frac{2}{3}n$ if and only if $G$ is isomorphic to a $6$-cycle or is the $2$-corona of the cycle $C$.
\end{clm}
%\claimproof
\noindent
\emph{Proof of Claim~\ref{c:G1}.} Suppose that $G = G_1$. If $T \ne \emptyset$, then this would imply that $V_2 \ne \emptyset$, contradicting our supposition that $V(G) = V_1$. Hence, $T = \emptyset$, and so $V_1 = V_0$. Thus, either $G$ is the $k$-cycle $C$ or $V(G) \ne V(C)$ and every component in $G' = G - V(C)$ is isomorphic to $P_1$, $P_2$ or $P_3$. Suppose that $G = C$. Then, $n = k$. If $k = 5$, then $G = C_5$ and $\gLT(G) = 3 < \frac{2}{3}n$. If $k = 6$, then $G = C_6$ and $\gLT(G) = \frac{2}{3}n$. If $G = C$ and $k > 6$, then, as observed in~\cite{hhh06}, $\gLT(G) = \gt(G) = \lfloor n/2 \rfloor + \lceil n/4 \rceil - \lfloor n/4 \rfloor \le \frac{1}{2}n + 1 < \frac{2}{3}n$. Hence we may assume that $G \ne C$, for otherwise the desired result follows. As observed earlier, every component of $G'$ is isomorphic to $P_1$, $P_2$ or $P_3$. Among all components of $G'$, let $P'$ be chosen so that its order is maximum. We now consider the graph $F = G - V(P')$. Clearly, $F$ is twin-free, since $G$ is twin-free and removing $P'$ from $G$ cannot create any twins. Applying the inductive hypothesis to the graph $F$, $\gLT(F) \le \frac{2}{3}|V(F)|$. Further, $\gLT(F) = \frac{2}{3}|V(F)|$ if and only if $F$ is isomorphic to a $6$-cycle, $C_6$, or is the $2$-corona of some connected nontrivial graph that contains no $4$-cycles.

\begin{unnumbered}{Subclaim~8.C.1}
If $\gLT(F)<\frac{2}{3}|V(F)|$, then the desired result of Claim~\ref{c:G1} holds.
\end{unnumbered}
\emph{Proof of Subclaim~8.C.1.}   Suppose that $\gLT(F)<\frac{2}{3}|V(F)|$. If $P'=P_3$, consider a minimum LTD-set $D_F$ of $F$, and note that $D_F$ together with the two vertices of $P'$ that have degree at least~$2$ in $G$, forms a LTD-set of $G$ of size strictly less that $\frac{2}{3}n$. Hence, we may assume that $P'$ is isomorphic to $P_1$ or $P_2$. By our choice of $P'$, this implies that every component of $G'$ is isomorphic to $P_1$ or $P_2$. We now construct a set $Q$ with $V(P')\subset Q$. Renaming vertices of $C$, if necessary, we may assume that $u_1$ is the vertex of $C$ adjacent to a vertex of $P'$. We initially define $Q$ to contain both $u_1$ and $u_2$, as well as all vertices that belong to a $P_1$- or $P_2$-component of $G-\{u_1,u_2\}$.
If $u_3$ has degree~$2$ in $G$ and $u_4$ has a leaf-neighbor in $G$, say $u_4'$, then $u_3$ and $u_4'$ are (open) twins in $G - Q$. In this case, we add the vertex $u_3$ to the set $Q$.
Analogously, if $u_0$ has degree~$2$ in $G$ and $u_{k-1}$ has a leaf-neighbor in $G$,  then we add the vertex $u_0$ to the set $Q$.
By construction, the resulting graph $G - Q$ is twin-free, unless we have the special case when $k = 5$, both $u_0$ and $u_3$ have degree~$2$ in $G$, and $u_4$ has degree~$3$ in $G$ with a leaf-neighbor in $G$. In this case, graph $G$ is determined and the set $\{u_0,u_1,u_2,u_4\}$ together with the vertices of every $P_2$-component in $G'$ that have a neighbor in $V(C)$ forms a LTD-set of $G$ of size strictly less that $\frac{2}{3}n$. Hence, we may assume that the graph $F' = G - Q$ is twin-free.

Applying the inductive hypothesis to the graph $F'$ there exists a LTD-set, $D_F'$, of $F'$ of size at most $\frac{2}{3}|V(F')|$. Although $G[Q]$ is not necessarily twin-free, by similar arguments as before we can easily choose a set $D_Q$ of size at most $\frac{2}{3}|Q|$ such that $D_F'\cup D_Q$ is a LTD-set of $G$ of size at most $\frac{2}{3}n$. Moreover, if $|D_F'\cup D_Q|=\frac{2}{3}n$, then $F'$ must be either the $2$-corona of the path $G[V(C)\setminus Q]$, or $F' = P_6$.
Furthermore, $|Q| = 6$ and $G[Q]$ is either a $P_6$, a $P_4$ with an additional leaf attached to each central vertex, or a $P_5$ with an additional leaf forming a twin with another leaf. If $F'=P_6$ or $G[Q]\neq P_6$, we can readily find a LTD-set  of $G$ strictly smaller than $\frac{2}{3}n$. Otherwise, $G$ is the $2$-corona of $C$, and we are done. This completes the proof of Subclaim~8.C.1.~\smallqed

\medskip
By Subclaim~8.C.1, we may assume that $\gLT(F)=\frac{2}{3}|V(F)|$, for otherwise the desired result follows. If $F = C_6$, then $\gLT(G) < \frac{2}{3}n$, irrespective of whether $P'$ is isomorphic to $P_1$, $P_2$ or $P_3$. Hence, we may assume that $F \ne C_6$, for otherwise the desired result follows. Thus, $F$ is the $2$-corona of some connected nontrivial graph, say $F'$, that contains no $4$-cycles. Let $A_F$ and $B_F$ be the set of leaves and support vertices, respectively, in $F$, and let $C_F$ be the remaining vertices of $F$. Thus, $F' = F[C_F]$. If $P'$ is not isomorphic to $P_3$, or if $P'$ is isomorphic to $P_3$ and contains a vertex adjacent to $A_F$ or $B_F$, then it is a simple exercise to see that $\gLT(G) < \frac{2}{3}n$. Further, if $P'$ is isomorphic to $P_3$ and contains two or more vertices adjacent to vertices of $C_F$, then $\gLT(G) < \frac{2}{3}n$. If $P'$ is isomorphic to $P_3$ and contains exactly one vertex adjacent to vertices of $C_F$, then $\gLT(G) = \frac{2}{3}n$ and $G$ is the $2$-corona of some connected nontrivial graph that contains no $4$-cycles. This completes the proof of Claim~\ref{c:G1}.~\smallqed

\medskip
By Claim~\ref{c:G1}, we may assume that $G \ne G_1$, i.e., $V_2 \ne \emptyset$. An identical proof as in the proof of Subclaim~8.B.1 shows that $G_2$ is twin-free. Let $D_2$ be a minimum LTD-set of $G_2$. Applying the induction hypothesis to each component of $G_2$, the set $D_2$ satisfies $|D_2| \le \frac{2}{3}|V_2|$. Further, if $|D_2| = \frac{2}{3}|V_2|$, then each component of $G_2$ is isomorphic to a $6$-cycle or is the $2$-corona of some connected nontrivial graph that contains no $4$-cycles.

Recall that $G[V_0]$ is twin-free. We now build sets $V_1'$ and $T'$ such that $V_0 \subseteq V_1' \subseteq V_1 = V_0 \cup T$ and $T' \subseteq T$, as follows. Initially, we let $V_1' = V_0$ and $T' = T$. We consider the vertices of $T$ sequentially. Let $t$ be a vertex in $T$, and recall that $t$ has exactly one neighbor, say $u_t$, in $V_0$, and such a neighbor belongs to $V(C)$. If $u_t$ has no leaf-neighbor in $G[V_1']$, we add $t$ to $V_1'$ and remove $t$ from $T'$. We iterate this process until all vertices of $T$ have been considered. Let $G_1'$ be the resulting graph $G[V_1']$.
This process yields a new partition of $V(G)$ into sets $V_2$, $V_1'$ and $T'$. Since $G[V_0]$ is twin-free, by construction of the set $V_1'$, the graph $G_1'$ is also twin-free. Since $V_2 \ne \emptyset$, the order of $G_1'$ is less than~$n$ and we can therefore apply the induction hypothesis to the connected twin-free graph $G_1'$. Let $D_1'$ be a minimum LTD-set of $G_1'$. By the induction hypothesis, the set $D_1'$ satisfies $|D_1'| \le \frac{2}{3}|V_1'| \le \frac{2}{3}|V_1|$. Further, if $|D_1'| = \frac{2}{3}|V_1'|$, then $G_1'$ is isomorphic to a $6$-cycle or is the $2$-corona of some connected nontrivial graph that contains no $4$-cycles.

We claim that $D = D_1' \cup D_2$ is a LTD-set of $G$. By the construction of the set $T'$, for each vertex $t$ of $T'$, there is a twin, say $t'$, of $t$ in $G - V_0$ that belongs to $V_2$ and has degree~$1$ in $G_2$. The common neighbor of $t$ and $t'$ in $V_2$ must belong to $D_2$. Further, since $t$ has not been removed from $T'$ during the construction of $T'$, the vertex $t$ has a neighbor $u_t$ in $V(C)$ which has a leaf-neighbor in $G_1'$, implying that the vertex $u_t$ belongs to $D_1'$. Hence, $t$ is dominated by two vertices of $D_1' \cup D_2$ and is therefore located by $D$, for otherwise we would have a $4$-cycle in $G$. Thus, every vertex of $T'$ is located by $D$. Since $D_1'$ and $D_2$ are TD-sets of $G_1'$ and $G_2$, respectively, and since every vertex in $T'$ is dominated by $D$, the set $D$ is a TD-set of $G$. Suppose, to the contrary, that $D$ is not locating. Then there is a pair of vertices, $u$ and $v$, that is not located by $D$. As observed earlier, neither $u$ nor $v$ belong to $T'$. Since $D_2$ is locating in $G_2$, we note that $(u,v) \notin V_2 \times V_2$. Analogously, since $D_1'$ is locating in $G_1'$, we note that $(u,v) \notin V_1' \times V_1'$. If $(u,v) \in V_1' \times V_2$, then $u$ is dominated by a vertex of $D_1'$ and $v$ is dominated by a vertex of $D_2$. Hence, $u$ and $v$ must both be dominated by these two vertices. But then these four vertices would form a $4$-cycle, a contradiction. Hence, $(u,v) \notin V_1' \times V_2$. Analogously, $(u,v) \notin V_2 \times V_1'$. This contradicts our supposition that $u$ and $v$ are not located by $D$. Therefore, $D$ is a LTD-set of $G$, and so
\begin{equation}
\gLT(G) \le |D| = |D_1'| + |D_2| \le \frac{2}{3}|V_1'| + \frac{2}{3}|V_2| \le  \frac{2}{3}|V_1| + \frac{2}{3}|V_2| = \frac{2}{3}n.
\label{Eq3}
\end{equation}

This establishes the desired upper bound. Suppose next that $\gLT(G) = \frac{2}{3}n$. Then we must have equality throughout the Inequality Chain~(\ref{Eq3}). In particular, $|D_1'| = \frac{2}{3}|V_1'| = \frac{2}{3}|V_1|$ and $|D_2| = \frac{2}{3}|V_2|$. This in turn implies that $T' = \emptyset$. Using an analogous proof as in the proof when equality holds in the Inequality Chain~(\ref{Eq2}) in the proof of Claim~\ref{c:triangle}, the graph $G$ can be shown to be the $2$-corona of some connected nontrivial graph that contains no $4$-cycles. Since the proof is very similar, we omit the details. This completes the proof of Theorem~\ref{t:no4cycle}.
%%  Let $A_1$ and $A_2$ be the set of degree~1-vertices of $G[V_1']$ and $G[V_2]$, respectively, forming the endpoints of the copies of $P_2$ when building these graphs as a corona. Similarly, let $B_1$ and $B_2$ be the set of other vertices of the copies of $P_2$ --- that have degree~2 in $G[V_1']$ and $G[V_2]$, respectively.
%%
%% Consider an edge $xy$ between $G[V_1']$ and $G[V_2]$ (there must exist one since $G$ is connected). Let $v_1$, $b_1$ and $a_1$ be the vertices of $G[V_1']$ such that $x\in\{v_1,b_1,a_1\}$, $v_1\in C$ and $b_1$ and $a_1$ ar the vertices of the copy of $P_2$ attached to $v_1$. Similarly, let $v_2,b_2,a_2$ be the similarly defined vertices of $G[V_2]$ belonging to $G[V_2]$ with $y\in\{v_2,b_2,a_2\}$. If there all such edges are between $C$ and $H_2$, $G$ belongs to the family and we are done. In all other cases, we will find a LTD-set $D$ of size less than $\frac{2}{3}n$, a contradiction. If $xy\in\{v_1b_2,b_1b_2\}$, let $D=(V(C)\cup V(H_2)\cup B_1\cup B_2)\setminus\{v_2\}$. If $xy=v_2b_1$, let $D=(V(C)\cup V(H_2)\cup B_1\cup B_2)\setminus\{v_1\}$ is. If $xy\in\{v_1a_2,b_1a_2\}$, let $D=(V(C)\cup V(H_2)\cup B_1\cup B_2\cup\{a_2\})\setminus\{v_2,b_2\}$. If $xy\in\{v_2a_1,b_2a_1\}$, let $D=(V(C)\cup V(H_2)\cup B_1\cup B_2\cup\{a_1\})\setminus\{v_1,b_1\}$. Finally, if $xy=a_1a_2$, we let $D=(V(C)\cup V(H_2)\cup B_1\cup B_2\cup\{a_1,a_2\})\setminus\{v_1,b_1,b_2\}$.
\end{proof}

\section{Graphs with given minimum degree}\label{sec:mindegree}

We now discuss the special case of graphs of given minimum degree.

\subsection{Minimum degree two}

If we forbid a certain set of six graphs (each of them of order at most~$10$), then it is known (see~\cite{H00}) that every connected graph $G$ of order~$n$ with $\delta(G) \ge 2$ satisfies $\gt(G) \le 4n/7$. However, for graphs with minimum degree~$2$, the location-total domination number can be much larger than the total domination number. For example, let $G$ be the graph obtained by taking the disjoint union of $k \ge 2$ $5$-cycles, adding a new vertex~$v$ and joining $v$ with an edge to exactly one vertex from each $5$-cycle. The resulting twin-free graph $G$ has order~$n = 5k+1$, minimum degree~$\delta(G) = 2$ and satisfies $\gLT(G) = 3k = \frac{3}{5}(n-1)$ and $\gt(G) = 2(k+1) = \frac{2}{5}(n-1) + 2$.

We believe that Conjecture~\ref{conj} can be strengthened for graphs with minimum degree at least~$2$ and pose the following question.

\begin{quest}
Is it true that every twin-free graph with order $n$, no isolated vertices and minimum degree~$2$ satisfies $\gLT(G)\le \frac{3n}{5}$?
\label{Q1}
\end{quest}

If Question~\ref{Q1} is true, then the bound is asymptotically tight by the examples given earlier.%since as shown earlier there exist twin-free connected graphs $G$ of arbitrarily large order~$n$ with minimum degree~$\delta(G) = 2$ satisfying $\gLT(G) = \frac{3}{5}(n-1)$.

\subsection{Large minimum degree}

The following is an upper bound on $\gt(G)$ according to the minimum degree $\delta$ of $G$.

\begin{thm}[Henning,Yeo~\cite{HY07}] \label{rhm:delta}
If $G$ is a graph with minimum degree~$\delta \ge 1$ and order~$n$,
then
\[
\gt(G) \le \left( \frac{ 1 + \ln \delta } { \delta } \right)
n.
\]
\end{thm}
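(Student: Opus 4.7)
The plan is to prove this bound via a standard probabilistic argument (this is essentially the Arnautov/Payan-style technique extended to total domination). I would first pick a random set $A \subseteq V(G)$ by including each vertex independently with probability $p$, where the value of $p$ will be optimized at the end. The set $A$ will handle most of the total domination requirement, and the remaining (``bad'') vertices will be patched in a second step.

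Next, I would define $B$ to be the set of vertices of $G$ that have \emph{no} neighbor in $A$, i.e., $B = \{v \in V(G) : N(v) \cap A = \emptyset\}$. For every $v \in B$, since $\delta \ge 1$ the vertex $v$ has at least one neighbor in $G$; arbitrarily pick one such neighbor, and let $C$ collect these chosen neighbors. I would then define $D = A \cup C$ and verify that $D$ is a total dominating set: any vertex $v \notin B$ has some neighbor in $A \subseteq D$ by definition of $B$, while any vertex $v \in B$ has the chosen neighbor in $C \subseteq D$. Hence $D$ totally dominates $G$ and clearly $|D| \le |A| + |B|$.

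The core computation is to bound $\mathbb{E}[|A| + |B|]$. By linearity of expectation, $\mathbb{E}[|A|] = pn$, and for each vertex $v$,
\[
\Pr[v \in B] = \prod_{u \in N(v)} \Pr[u \notin A] = (1-p)^{d(v)} \le (1-p)^{\delta} \le e^{-p\delta},
\]
so $\mathbb{E}[|B|] \le n e^{-p\delta}$. Consequently $\mathbb{E}[|D|] \le pn + n e^{-p\delta}$, and hence there exists an outcome of the random choices producing a TD-set of size at most this expected value, giving $\gamma_t(G) \le pn + n e^{-p\delta}$.

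The final step is to optimize the choice of $p$. Taking the derivative of $f(p) = pn + n e^{-p\delta}$ and setting it to zero gives $e^{-p\delta} = 1/\delta$, i.e., $p = (\ln \delta)/\delta$; this lies in $[0,1]$ as soon as $\delta \ge 1$. Substituting back produces $\gamma_t(G) \le \frac{n \ln \delta}{\delta} + \frac{n}{\delta} = \frac{(1+\ln \delta)n}{\delta}$, as required. The only subtle point (and the closest thing to an obstacle) is verifying that $A \cup C$ is genuinely a TD-set (rather than merely a dominating set): the construction of $C$ is tailored precisely to this, and this is what distinguishes the proof from the analogous $\gamma(G) \le \frac{(1+\ln(\delta+1))n}{\delta+1}$ bound for ordinary domination, where one instead takes $A \cup B$ directly.
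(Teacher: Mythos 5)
Your argument is correct: the random set $A$, the patch set $C$ consisting of one chosen neighbor per uncovered vertex, the estimate $\mathbb{E}[|A|+|B|] \le pn + ne^{-p\delta}$, and the optimization at $p = (\ln\delta)/\delta$ (which indeed lies in $[0,1]$ for every $\delta \ge 1$, since $\ln x/x \le 1/e$) all check out, and the case $\delta=1$ degenerates harmlessly to the trivial bound $\gamma_t(G)\le n$. Be aware, though, that the paper does not prove this statement: it is quoted from Henning and Yeo~\cite{HY07} and used as a black box in Corollary~\ref{thm:mindeg}. The original derivation in~\cite{HY07} encodes total domination as a transversal problem in the open neighborhood hypergraph of $G$ (whose edges are the sets $N(v)$, each of size at least $\delta$) and then invokes a bound on transversals of such hypergraphs; that transversal bound is itself proved by essentially the same first-moment computation you carry out directly on the graph. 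So your proof is a legitimate, self-contained shortcut: it trades the generality of the hypergraph formulation, which Henning and Yeo exploit for other purposes, for a direct two-round probabilistic construction, and your closing remark correctly isolates the one point where total domination differs from ordinary domination --- an uncovered vertex must be repaired by adding a neighbor of it rather than the vertex itself, which is exactly why the resulting set is a TD-set and why the bound involves $\delta$ rather than $\delta+1$.
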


Using Observation~\ref{ob:D=T+L}, we obtain the following corollary of the results in~\cite{Heia,conjpaper} and Theorem~\ref{rhm:delta}.

\begin{cor}\label{thm:mindeg}
Let $G$ be a twin-free graph of minimum degree~$\delta\ge  1$. We have
\[
\gLT(G) \le  \left(\frac{2}{3}+\frac{1+\ln\delta}{\delta}\right)n.
\]
Moreover, if $G$ is a bipartite, co-bipartite or split graph, then
\[
\gLT(G)\le \left(\frac{1}{2}+\frac{1+\ln\delta}{\delta}\right)n.
\]
If Conjecture~\ref{conj-LD} holds, we always have $\gLT(G)\le  \left(\frac{1}{2}+\frac{1+\ln\delta}{\delta}\right)n$.
\end{cor}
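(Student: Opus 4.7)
The proof will be a direct application of Observation~\ref{ob:D=T+L} combined with the three ingredients listed: the general bound $\gL(G) \le \frac{2}{3}n$ for twin-free graphs (proved in~\cite{Heia}), the refined bound $\gL(G) \le \frac{1}{2}n$ known for twin-free bipartite, co-bipartite, and split graphs (from~\cite{conjpaper,Heia}), and Theorem~\ref{rhm:delta} for the total domination number.

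First I would note that since $G$ is twin-free (and we assume without loss of generality that $G$ has no isolated vertices, so that $\gt(G)$ is well-defined), Observation~\ref{ob:D=T+L} gives the master inequality
\[
\gLT(G) \le \gL(G) + \gt(G).
\]
Then I would bound each summand separately. Theorem~\ref{rhm:delta} immediately yields $\gt(G) \le \frac{1+\ln\delta}{\delta}\,n$, and this bound is used identically in all three cases of the corollary statement. For the first inequality, I would invoke the general bound $\gL(G)\le \frac{2}{3}n$ for twin-free graphs without isolated vertices proved in~\cite{Heia}, adding it to the $\gt(G)$ bound to get the first displayed expression.

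For the second inequality, I would apply the stronger bound $\gL(G)\le \frac{1}{2}n$ known for twin-free bipartite graphs (shown in~\cite{conjpaper}) and for twin-free split and co-bipartite graphs (shown in~\cite{Heia}); again summing with the bound on $\gt(G)$ gives the desired $\left(\frac{1}{2} + \frac{1+\ln\delta}{\delta}\right)n$. For the third statement, the same $\frac{1}{2}n$ bound on $\gL(G)$ is the content of Conjecture~\ref{conj-LD}, so conditional on that conjecture the argument goes through verbatim for every twin-free graph $G$.

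There is no real obstacle here: the entire proof is a two-line combination of existing results, and I would present it as such without further calculation. The only minor subtlety worth mentioning is that the $\gt$ bound of Theorem~\ref{rhm:delta} requires $\delta\ge 1$, which is already built into the hypothesis, and that Observation~\ref{ob:D=T+L} requires no isolated vertices, which is implied by $\delta\ge 1$.
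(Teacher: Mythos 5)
Your proposal is correct and is exactly the argument the paper intends: the corollary is derived by combining Observation~\ref{ob:D=T+L} with the bound $\gt(G)\le\frac{1+\ln\delta}{\delta}n$ from Theorem~\ref{rhm:delta} and the known bounds $\gL(G)\le\frac{2}{3}n$ for twin-free graphs (resp.\ $\gL(G)\le\frac{1}{2}n$ for twin-free bipartite, co-bipartite and split graphs, or conditionally under Conjecture~\ref{conj-LD}). Nothing is missing; your remarks about $\delta\ge 1$ guaranteeing no isolated vertices are a harmless and accurate clarification.
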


It follows from Corollary~\ref{thm:mindeg} that Conjecture~\ref{conj} asymptotically holds for large minimum degree, in the sense that $\lim_{\delta\rightarrow\infty}\left(\frac{2}{3}+\frac{1+\ln\delta}{\delta}\right)=\frac{2}{3}$. Moreover, Conjecture~\ref{conj} holds for bipartite, co-bipartite, and split graphs with minimum degree $\delta\ge  26$. Finally, if Conjecture~\ref{conj-LD} holds, then Conjecture~\ref{conj} holds whenever $\delta\ge  26$.

\section{Conclusion}\label{sec:conclu}

A classic result in total domination theory in graphs is that every connected graph of order $n \ge 3$ has a total dominating set of size at most $\frac{2}{3}n$. In this paper, we conjecture that every twin-free graph of order~$n$ with no isolated vertex has a locating-total dominating set of size at most $\frac{2}{3}n$ and we prove our conjecture for graphs with no $4$-cycles. We also prove that our conjecture, namely Conjecture~\ref{conj}, holds asymptotically for large minimum degree.
Since Conjecture~\ref{conj-LD} was proved for bipartite graphs~\cite{conjpaper} and cubic graphs~\cite{cubic}, can we prove Conjecture~\ref{conj} for these classes as well?

\medskip


\begin{thebibliography}{99}

%\bibitem{Berge} C. Berge, \textit{C. R. Acad. Sci. Paris Ser. I Math.} \textbf{247,} (1958) 258--259 and \textit{Graphs and Hypergraphs} (Chap. 8, Theorem 12), North-Holland, Amsterdam, 1973.


%\bibitem{BC79indep} B.~Bollob{\'a}s and E.~J. Cockayne, Graph-theoretic parameters concerning domination, independence, and irredundance. \textit{J. Graph Theory} \textbf{3} (1979), 241--249.


\bibitem{BCMMS07} M. Blidia, M. Chellali, F. Maffray, J. Moncel, and A. Semri. Locating-domination and identifying codes in trees, \textit{Australas. J. Combin.} \textbf{39} (2007), 219--232.

\bibitem{BD11} M. Blidia and W. Dali. A characterization of locating-total domination edge critical graphs, \textit{Discuss. Math. Graph Theory} \textbf{31}(1) (2011), 197--202.

\bibitem{BFL08} M. Blidia, O. Favaron, and R. Lounes. Locating-domination, 2-domination and independence in trees, \textit{Australas. J. Combin.} \textbf{42} (2008), 309--319.

\bibitem{BCV00} R. C. Brigham, J. R. Carrington, and R. P. Vitray. Connected graphs with maximum total domination number. \textit{J. Combin. Comput. Combin. Math.} \textbf{34} (2000), 81--96.

\bibitem{C08} M. Chellali. On locating and differentiating-total domination in trees, \textit{Discuss. Math. Graph Theory} \textbf{28}(3) (2008), 383--392.

\bibitem{CR09} M. Chellali and N. Jafari Rad. Locating-total domination critical graphs, \textit{Australas. J. Combin.} \textbf{45} (2009), 227--234.

\bibitem{CS11} X.G. Chen and M.Y. Sohn. Bounds on the locating-total domination number of a tree, \textit{Discrete Appl. Math.} \textbf{159} (2011), 769--773.

\bibitem{CDH80} E. J. Cockayne, R. M. Dawes, and S. T. Hedetniemi. Total domination in graphs. \textit{Networks} \textbf{10} (1980), 211--219.

\bibitem{cst} C. J. Colbourn, P. J. Slater, and L. K. Stewart. Locating-dominating sets in series-parallel networks. \textit{Congr. Numer.} \textbf{56} (1987), 135--162.

\bibitem{fh} A. Finbow and B. L. Hartnell. On locating dominating sets and well-covered graphs. \textit{Congr. Numer.} \textbf{65} (1988), 191--200.


%\bibitem{fgknpv11} F.~Foucaud, E.~Guerrini, M.~Kov\v{s}e, R.~Naserasr, A.~Parreau, and P.~Valicov. Extremal graphs for the identifying code problem. \textit{European J. Combin.} \textbf{32}(4) (2011), 628--638.

%\bibitem{lineID} F.~Foucaud, S.~Gravier, R.~Naserasr, A.~Parreau, and P.~Valicov. Identifying codes in line graphs. \emph{J. Graph Theory} \textbf{73}(4) (2013), 425--448.

\bibitem{cubic} F. Foucaud and M. A. Henning. Location-domination and matching in cubic graphs. \textit{Discrete Math.} \textbf{339} (2016),  1221--1231.

%\bibitem{Line} F.~Foucaud and M. A. Henning. Locating-dominating sets in line graphs, manuscript.

\bibitem{Heia} F.~Foucaud, M. A. Henning, C. L\"{o}wenstein and T. Sasse. Locating-dominating sets in twin-free graphs. \textit{Discrete Appl. Math.} \textbf{200} (2016), 52--58.

\bibitem{conjpaper} D. Garijo, A. Gonz\'alez and A. M\'arquez. The difference between the metric dimension and the determining number of a graph. \textit{Appl. Math. Comput.} \textbf{249} (2014), 487--501.

\bibitem{hhs1} T. W. Haynes, S. T. Hedetniemi, and P. J. Slater,
    \emph{Fundamentals of Domination in Graphs}, Marcel Dekker, Inc. New York, 1998.

\bibitem{hhs2} T. W. Haynes, S. T. Hedetniemi, and P. J. Slater
    (eds), \emph{Domination  in Graphs: Advanced Topics}, Marcel
    Dekker, Inc. New
     York, 1998.

\bibitem{hhh06} T. W. Haynes, M. A. Henning, and J. Howard. Locating and total dominating sets in trees. \textit{Discrete Appl. Math.} \textbf{154} (2006), 1293--1300.

\bibitem{H00} M. A. Henning. Graphs with large total domination number. \textit{J. Graph Theory} \textbf{35}(1) (2000), 21--45.

\bibitem{hl12} M. A. Henning and C. L\"owenstein. Locating-total domination in claw-free cubic graphs. \textit{Discrete Math.} \textbf{312}(21) (2012), 3107--3116.

\bibitem{hr12} M. A. Henning and N. J. Rad. Locating-total domination in graphs, \textit{Discrete Appl. Math.} \textbf{160} (2012), 1986--1993.

\bibitem{HY07} M. A. Henning and A. Yeo. A transition from total domination in graphs to transversals in hypergraphs. \textit{Quaestiones Math.} \textbf{30} (2007), 417--436.

\bibitem{bookTD} M. A. Henning and A. Yeo, \emph{Total domination in graphs}, Springer-Verlag, 2013.

% relation LD/LD of complement:
%\bibitem{hmp14} C. Hernando, M. Mora, and I. M. Pelayo. Nordhaus-Gaddum bounds for locating domination. \textit{European J. Combin.}, 36:1–6, 2014.

%\bibitem{px82} C. Payan and N. H. Xuong. Domination-balanced graphs. \textit{J. Graph Theory} \textbf{6}:23--32, 1982.


\bibitem{o9} O. Ore, \emph{Theory of graphs}. \textit{Amer. Math. Soc. Transl.} \textbf{38} (Amer. Math. Soc., Providence, RI, 1962), 206--212.


\bibitem{rs} D. F. Rall and P. J. Slater. On location-domination numbers for certain classes of graphs. \textit{Congr. Numer.} \textbf{45} (1984), 97--106.

\bibitem{s2} P. J. Slater, Dominating and location in acyclic graphs. \textit{Networks} \textbf{17} (1987), 55--64.

\bibitem{s3} P. J. Slater, Dominating and reference sets in graphs. \textit{J. Math. Phys. Sci.} \textbf{22} (1988), 445--455.

\bibitem{s4} P. J. Slater. Locating dominating sets and locating-dominating sets. In Y.~Alavi and A.~Schwenk, editors, \emph{Graph Theory, Combinatorics, and Applications, Proc. Seventh Quad. Internat. Conf. on the Theory and Applications of Graphs}, pages 1073--1079. John Wiley \& Sons, Inc., 1995.


\end{thebibliography}
\end{document}